 \numberwithin{equation}{section} % \renewcommand{\rm}{\normalshape} %
\theoremstyle{plain}
\newtheorem{thm}{Theorem}[section]
\newtheorem{lem}[thm]{Lemma}
\newtheorem{pro}[thm]{Proposition}
\newtheorem{cor}[thm]{Corollary}
\newtheorem{de}[thm]{Definition}
\def\R {{\Bbb R}}
\def\N {{\Bbb N}}
\def\Z {{\Bbb Z}}
\def\ba{{\bf a}}
\def\va{{\bf a}}
\newcommand{\bi}{\mathbf{i}}
\newcommand{\innerprod}[2]{\langle #1, #2 \rangle}
\providecommand{\norm}[2][]{\lVert#2\rVert_{#1}}
\providecommand{\abs}[1]{\lvert#1\rvert}
\newcommand{\euclid}[1][d]{\mathbb{R}^{#1}}
\begin{document}
\baselineskip 13.7pt
\title{Typical self-affine sets with non-empty interior}

\author{De-Jun FENG}
\address{
Department of Mathematics\\
The Chinese University of Hong Kong\\
Shatin,  Hong Kong
}
\email{djfeng@math.cuhk.edu.hk}

\author{Zhou Feng}
\address{
Department of Mathematics\\
The Chinese University of Hong Kong\\
Shatin,  Hong Kong
}
\email{zfeng@math.cuhk.edu.hk}

\thanks {
2020 {\it Mathematics Subject Classification}: 28A78, 28A80, 37C45, 37C70}

\keywords{Self-affine sets,  interior, Sobolev   dimension}
%\thanks { The research of Feng  was partially supported by the HKRGC GRF grant}
\date{}
\maketitle
\begin{center}
{\small \it Dedicated to the memory of Professor Ka-Sing Lau}
\end{center}

\begin{abstract} Let $T_1,\ldots, T_m$ be a family of $d\times d$ invertible real matrices with $\|T_i\| <1/2$  for $1\leq i\leq m$.
We provide some sufficient conditions on these matrices such that the self-affine set generated by the iterated function system $\{T_ix+a_i\}_{i=1}^m$ on $\R^d$ has non-empty interior for almost all $(a_1,\ldots, a_m)\in \R^{md}$.
\end{abstract}

\

\section{Introduction}
\label{S1}
In this paper, we provide some sufficient conditions  for a typical self-affine set to have non-empty interior.

Let us first introduce some necessary notation and definitions. By an {\it affine iterated function system} on $\R^d$  we mean a finite  family ${\mathcal F}=\{f_i\}_{i=1}^m$ of affine mappings from $\R^d$ to $\R^d$,
taking  the form
\begin{equation*}
\label{e-form}
f_i(x)=T_ix+a_i,\qquad i=1,\ldots, m,
\end{equation*}
where $T_i$ are contracting $d\times d$ invertible real  matrices and $a_i\in \R^d$.  It is  well known \cite{Hutchinson1981} that  there  exists a unique non-empty compact set $K\subset \R^d$ such that
$$
K=\bigcup_{i=1}^m f_i(K).
$$
We call $K$ the {\it attractor} of $\mathcal F$, or the {\em self-affine set} generated by ${\mathcal F}$.
%In particular, if all the maps in ${\mathcal F}$ are contracting similitudes, we call $K$ a {\em self-similar set}.

In what follows,  let $T_1,\ldots, T_m$ be a fixed family of contracting $d\times d$ invertible real  matrices. Let  $\Sigma=\{1,\ldots, m\}^\N$ denote the symbolic space over the alphabet $\{1,\ldots, m\}$. Endow $\Sigma$ with the  product topology and let $\mathcal P(\Sigma)$ denote the space of  Borel probability measures on $\Sigma$.

For $\va = (a_1, \ldots, a_m) \in \R^{md}$, let $\pi^\ba: \Sigma \to \R^d$ be the coding map associated with the IFS $\{ f_i^{\ba}(x) = T_ix + a_i\}_{i=1}^m$, here we write $f_i^\ba$ instead of $f_i$ to emphasize its dependence of $\ba$. That is,
\begin{equation}
\label{e-pia}
\pi^\ba (\bi) = \lim_{n \to \infty} f^{\va}_{i_1} \circ \cdots \circ f^{\va}_{i_n}(0)
\end{equation}
for $\bi =(i_n)_{n=1}^\infty\in \Sigma$. Set $K^\ba=\pi^\ba(\Sigma)$. It is well known \cite{Hutchinson1981} that $K^\ba$ is the attractor of $\{f^\ba_i\}_{i=1}^m$.

In his seminal work \cite{Falconer1988},  Falconer  introduced a quantity associated to the matrices $T_1,\ldots, T_m$, nowadays usually called the {\em affinity dimension} $\dim_{\rm AFF}(T_1,\ldots, T_m)$ (see Definition~\ref{def:affinity}), which is always an upper bound for the upper box-counting dimension of $K^\ba$, and such that when
$\|T_i\|<1/2$  for all $1\leq i\leq m$,
then for ${\mathcal L}^{md}$-a.e.~$\ba\in \R^{md}$,   $$\dim_{\rm H}K^\ba=\dim_{\rm B}K^\ba=\min \{d, \dim_{\rm AFF}(T_1,\ldots, T_m)\}.$$
where $\dim_{\rm H}$ and $\dim_{\rm B}$ stand for the Hausdorff and box-counting dimensions respectively (see e.g.~\cite{Falconer2003} for the definitions). In fact, Falconer proved this with $1/3$ as the upper bound on the norms; it was subsequently shown by Solomyak \cite{Solomyak1998} that $1/2$ suffices. Later, Jordan, Pollicott and Simon \cite{JordanEtAl2007} further showed that if $\|T_i\|<1/2$ for all $i$ and $\dim_{\rm AFF}(T_1,\ldots, T_m)>d$, then
$K^\ba$ has positive Lebesgue measure for ${\mathcal L}^{md}$-a.e.~$\ba\in \R^{md}$.  We remark that the condition $\dim_{\rm AFF}(T_1,\ldots, T_m)>d$ is equivalent to $\sum_{i=1}^m |\det(T_i)|>1$, where $\det(T_i)$ denotes the determinant of $T_i$.

A question arises naturally that under which conditions on $T_1,\ldots, T_m$, $K^\ba$ has non-empty interior for ${\mathcal L}^{md}$-a.e.~$\ba\in \R^{md}$. Although this seems a rather fundamental question,  it has hardly been studied.

In this paper, we study the above question.  For a $d\times d$ real matrix $A$, let $\alpha_1(A)\geq \cdots \geq \alpha_d(A)$ denote the singular values of $A$, that is, $\alpha_1(A),\ldots, \alpha_d(A)$ are square roots of the eigenvalues of $A^*A$.  Here $A^*$ stands for the transpose of $A$. Write $\Sigma_n=\{1,\ldots, m\}^n$ for $n\in \N$ and  set $T_I=T_{i_1}\cdots T_{i_n}$ for $I=i_1\ldots i_n\in \Sigma_n$.   Define
\begin{equation}
\label{e-tT}
	t(T_{1}, \ldots, T_{m}) = \inf \left \{ t\geq 0 \colon \sup_{n\geq 1}   \sum_{I\in\Sigma_{n}} \alpha_d(T_I)^t |\det(T_I)| \leq 1 \right \}.
\end{equation}

The first result of the paper is the following.

\begin{thm}\label{thm:general}
	Assume that $ \norm{T_{i}} < 1/2  $ for $1\leq i\leq m $. Suppose $t(T_{1}, \ldots, T_{m}) > d$.  Then $ K^{\ba} $  has non-empty interior for $ \mathcal L^{md} $-a.e.~$ \ba  \in \euclid[md] $.
\end{thm}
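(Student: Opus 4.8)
The plan is to fix, once and for all, a well‑chosen Borel probability measure $\mu$ on $\Sigma$ and to show that for $\mathcal L^{md}$‑a.e.~$\ba$ the push‑forward $\mu^\ba:=\pi^\ba_*\mu$ has a continuous density on $\R^d$. Since $\mathrm{supp}(\mu^\ba)\subseteq K^\ba$ and a continuous probability density is positive on some open set, this gives $\mathrm{int}(K^\ba)\ne\emptyset$. By the Sobolev embedding $H^\sigma(\R^d)\hookrightarrow C_0(\R^d)$ for $\sigma>d/2$, it suffices to prove that $\mu^\ba\in H^\sigma(\R^d)$ for some fixed $\sigma>d/2$ and a.e.~$\ba$. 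To this end I fix a product weight $\rho(\ba)=\prod_{\ell=1}^m\rho_\ell(a_\ell)$ with each $\rho_\ell\in C_c^\infty(\R^d)$ non‑negative and $\int\rho_\ell=1$; it is then enough to show $\int_{\R^{md}}\|\mu^\ba\|_{H^\sigma}^2\,\rho(\ba)\,d\ba<\infty$ for all such $\rho$ (letting the $\rho_\ell$ range over a countable family exhausts $\R^{md}$). Expanding $\|\mu^\ba\|_{H^\sigma}^2=\int_{\R^d}(1+|\xi|^2)^\sigma|\widehat{\mu^\ba}(\xi)|^2\,d\xi$, writing $|\widehat{\mu^\ba}(\xi)|^2$ as a $\mu\times\mu$‑double integral of $e^{-2\pi\ii\langle\xi,\,\pi^\ba(\bi)-\pi^\ba(\bj)\rangle}$, and interchanging the integrations, everything reduces to controlling the oscillatory integral
\[
J(\xi,\bi,\bj)\ :=\ \int_{\R^{md}}e^{-2\pi\ii\langle\xi,\,\pi^\ba(\bi)-\pi^\ba(\bj)\rangle}\,\rho(\ba)\,d\ba
\]
and then integrating the resulting bound against $(1+|\xi|^2)^\sigma\,d\xi\,d\mu(\bi)\,d\mu(\bj)$.

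The main obstacle is the transversality estimate for $J(\xi,\bi,\bj)$. By \eqref{e-pia}, $\pi^\ba(\bi)=\sum_{\ell=1}^m\big(\sum_{k:\,i_k=\ell}T_{i_1\cdots i_{k-1}}\big)a_\ell$, so $\pi^\ba(\bi)-\pi^\ba(\bj)=\sum_\ell c_\ell(\bi,\bj)\,a_\ell$ is linear in $\ba$ and $J(\xi,\bi,\bj)=\prod_{\ell=1}^m\widehat{\rho_\ell}\big(c_\ell(\bi,\bj)^*\xi\big)$. For $\bi\ne\bj$ let $I=\bi\wedge\bj\in\Sigma_n$ be their longest common prefix; a direct computation using $i_{n+1}\ne j_{n+1}$ shows that, after the cancellations coming from occurrences of $i_{n+1},j_{n+1}$ inside $I$, one has $c_{i_{n+1}}=T_I(\mathrm{Id}+E-F)$ and $c_{j_{n+1}}=-T_I(\mathrm{Id}+E'-F')$, where $E,E'$ (resp.\ $F,F'$) are norm‑convergent series of products of the matrices $T_{i_{n+1}},T_{i_{n+2}},\dots$ (resp.\ $T_{j_{n+1}},T_{j_{n+2}},\dots$) of length $\ge1$. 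This is exactly where $\|T_i\|<1/2$ is needed: grouping terms, $\|E+E'\|$ and $\|F+F'\|$ are each at most $\sum_{p\ge1}(\max_i\|T_i\|)^p<1$, and hence (if $|(\mathrm{Id}+E-F)^*v|$ and $|(\mathrm{Id}+E'-F')^*v|$ were both small for a unit vector $v$, adding the two relations would force $\|(E+E')-(F+F')\|$ arbitrarily close to $2$, a contradiction) one obtains
\[
(\mathrm{Id}+E-F)(\mathrm{Id}+E-F)^*+(\mathrm{Id}+E'-F')(\mathrm{Id}+E'-F')^*\ \succeq\ \varepsilon_0^2\,\mathrm{Id},\qquad \varepsilon_0:=\tfrac{1-2\max_i\|T_i\|}{1-\max_i\|T_i\|}>0 .
\]
Therefore $c_{i_{n+1}}c_{i_{n+1}}^*+c_{j_{n+1}}c_{j_{n+1}}^*\succeq\varepsilon_0^2\,T_IT_I^*$, and since $|\widehat{\rho_\ell}|\le1$ and the $\widehat{\rho_\ell}$ are Schwartz,
\[
|J(\xi,\bi,\bj)|\ \le\ \big|\widehat{\rho_{i_{n+1}}}(c_{i_{n+1}}^*\xi)\big|\,\big|\widehat{\rho_{j_{n+1}}}(c_{j_{n+1}}^*\xi)\big|\ \lesssim_N\ \big(1+\varepsilon_0^2\,|T_I^*\xi|^2\big)^{-N}\qquad\text{for every }N .
\]
It is important here that $\varepsilon_0$ depends only on $\max_i\|T_i\|$, not on $\bi,\bj$.

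Inserting this bound and performing the $\xi$‑integral via the substitution $\eta=T_I^*\xi$ (so $d\xi=|\det T_I|^{-1}d\eta$ and $|\xi|\le\alpha_d(T_I)^{-1}|\eta|$) gives, for $N$ large enough,
\[
\int_{\R^{md}}\|\mu^\ba\|_{H^\sigma}^2\,\rho(\ba)\,d\ba\ \lesssim\ \sum_{n\ge0}\ \sum_{I\in\Sigma_n}\alpha_d(T_I)^{-2\sigma}\,|\det T_I|^{-1}\,\mu([I])^2 ,
\]
using $(\mu\times\mu)\{(\bi,\bj):\bi\wedge\bj=I\}\le\mu([I])^2$, with $[I]$ the cylinder of $I$. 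It remains to choose $\mu$ so that this series converges, and this is where $t(T_1,\dots,T_m)>d$ enters. Pick $t'\in\big(d,\,t(T_1,\dots,T_m)\big)$ and $\sigma\in(d/2,\,t'/2)$ — possible precisely because $t(T_1,\dots,T_m)>d$. By the definition of $t(T_1,\dots,T_m)$ the super‑multiplicative sequence $a_n:=\sum_{I\in\Sigma_n}\alpha_d(T_I)^{t'}|\det T_I|$ satisfies $\Lambda:=\lim_n a_n^{1/n}=\sup_n a_n^{1/n}>1$; and since $\sigma+t'/2<t'$ and $\alpha_d(T_I)<1$, the super‑multiplicative potential $\psi(I):=\alpha_d(T_I)^{\sigma+t'/2}|\det T_I|$ dominates $\alpha_d(T_I)^{t'}|\det T_I|$, so $\sum_{I\in\Sigma_N}\psi(I)\ge\Lambda'^{\,N}$ for every $\Lambda'<\Lambda$ and all large $N$. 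Fix $\Lambda'\in(\sqrt\Lambda,\Lambda)$, a corresponding $N$, and let $\mu$ be the $N$‑block Bernoulli measure on $\Sigma$ with block‑weights proportional to $\psi(I)$, $I\in\Sigma_N$. Using $\psi(IJ)\ge\psi(I)\psi(J)$ one checks that $\mu([I])\lesssim\Lambda'^{-|I|}\psi(I)$, whence the $n$‑th inner sum above is $\lesssim\Lambda'^{-2n}\sum_{I\in\Sigma_n}\alpha_d(T_I)^{t'}|\det T_I|\le(\Lambda/\Lambda'^{2})^{n}$, which is summable. Hence $\mu^\ba\in H^\sigma$ for a.e.~$\ba$, which proves the theorem. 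The two genuinely delicate points are the transversality lemma of the second paragraph (the matrix inequality, where $\|T_i\|<1/2$ is essential and must be exploited without wastefully passing to operator norms) and the verification that the block‑Bernoulli measure $\mu$ satisfies $\mu([I])\lesssim\Lambda'^{-|I|}\psi(I)$ uniformly in $I$.
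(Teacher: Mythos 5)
Your proposal is correct and follows essentially the same route as the paper's proof: an averaged Fourier/Sobolev-energy estimate over $\ba$ (your $H^\sigma$ embedding is Lemma \ref{lem-Mat} in disguise, with $t=2\sigma>d$), the Solomyak-type transversality bound giving $\alpha_d(\mathrm{Id}+E-F)\geq(1-2\delta)/(1-\delta)$ for at least one of the two distinguished coordinate blocks, the integral asymptotics $\int(1+\|T_I^{*}\xi\|)^{-N}\|\xi\|^{2\sigma}\,d\xi\approx\alpha_d(T_I)^{-2\sigma}|\det T_I|^{-1}$, and an $N$-block Bernoulli measure adapted to the super-multiplicative weight $\alpha_d(T_I)^{t}|\det T_I|$. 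One small correction in the transversality step: the series should be grouped by tail, i.e.\ $\|E\|+\|F'\|\leq\delta/(1-\delta)$ and $\|E'\|+\|F\|\leq\delta/(1-\delta)$ (your claim that $\|E+E'\|$ and $\|F+F'\|$ are each at most $\delta/(1-\delta)$ mixes the two tails and need not hold), which still gives $\min\{\|E-F\|,\|E'-F'\|\}\leq\delta/(1-\delta)<1$ and hence the asserted positive-definiteness with the same $\varepsilon_0$.
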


It is easy to see that $t(T_{1}, \ldots, T_{m}) > d$ if and only if $\sum_{I\in \Sigma_n}\alpha_d(T_I)^d |\det(T_I)|>1$ for some $n\in \N$. As a direct corollary of Theorem \ref{thm:general}, we have the following.

\begin{cor}
\label{cor-1.2}
Assume that $ \norm{T_{i}} < 1/2  $ for $1\leq i\leq m $. Then  $ K^{\ba} $  has non-empty interior for $ \mathcal L^{md} $-a.e.~$ \ba  \in \euclid[md] $ provided that one of the following two conditions fulfills:
\begin{itemize}
\item[(i)] $\sum_{i=1}^{m}\alpha_{d}(T_{i})^{d}\abs{\det (T_{i})} > 1.$
\medskip
\item[(ii)] All $T_i$ are scalar multiples of orthogonal matrices, and $\sum_{i=1}^m |\det(T_i)|^2>1$.
\end{itemize}
\end{cor}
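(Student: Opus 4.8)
The plan is to deduce Theorem~\ref{thm:general} from a Fourier-analytic estimate on the pushforward measures $\pi^\ba_*\mu$, following the transversality philosophy of Falconer, Solomyak and Jordan--Pollicott--Simon, but pushing it to the level of interiors rather than positive measure. The key idea is that non-empty interior of $K^\ba$ follows if one can find a measure $\mu\in\mathcal P(\Sigma)$ such that, for $\ba$ in a set of positive Lebesgue measure, the pushforward $\pi^\ba_*\mu$ is absolutely continuous with a density in a Sobolev space $H^s(\R^d)$ for some $s>d/2$; by Sobolev embedding such a density is continuous, hence bounded below on an open set, and that open set lies in $\mathrm{supp}(\pi^\ba_*\mu)\subseteq K^\ba$. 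So the first step is to reduce ``non-empty interior for a.e.\ $\ba$'' to ``for a.e.\ $\ba$ there is a Bernoulli (or suitably chosen) measure $\mu$ with $\pi^\ba_*\mu\in H^s$ for some $s>d/2$''. For this reduction one should actually argue the other way: show that the set of $\ba$ for which $K^\ba$ has empty interior is Lebesgue-null, by exhibiting, for a fixed good $\mu$, a uniform-in-$\ba$ bound on $\int_{B}\|\pi^\ba_*\mu\|_{H^s}^2\,d\ba$ over balls $B$ in parameter space, which forces $\pi^\ba_*\mu\in H^s$ for a.e.\ $\ba\in B$, and then cover $\R^{md}$ by countably many such balls.

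The second and central step is the Sobolev-norm energy estimate. One computes (formally)
\[
\int_B \|\pi^\ba_*\mu\|_{H^s}^2\,d\ba
=\int_B\int_{\R^d}(1+|\xi|^2)^s\,|\widehat{\pi^\ba_*\mu}(\xi)|^2\,d\xi\,d\ba,
\]
expands $|\widehat{\pi^\ba_*\mu}(\xi)|^2=\iint e^{-2\pi i\langle\xi,\pi^\ba(\bi)-\pi^\ba(\bj)\rangle}\,d\mu(\bi)\,d\mu(\bj)$, and integrates in $\ba$ first. The transversality input is a bound of the form $\int_B e^{-2\pi i\langle\xi,\pi^\ba(\bi)-\pi^\ba(\bj)\rangle}\,d\ba = O\big((1+\|T_{\bi\wedge\bj}\|\,|\xi|)^{-d}\big)$, or more precisely an estimate governed by the singular values $\alpha_d(T_{\bi\wedge\bj})$, where $\bi\wedge\bj$ is the common prefix; this is where the hypothesis $\|T_i\|<1/2$ enters, exactly as in Solomyak's argument, to guarantee that the map $\ba\mapsto\pi^\ba(\bi)-\pi^\ba(\bj)$ has a sufficiently non-degenerate derivative in the relevant directions. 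Carrying out the $\xi$-integral then reduces the whole quantity to a series of the form $\sum_{n}\sum_{I\in\Sigma_n}\mu([I])^2\,\alpha_d(T_I)^{-s}\cdot(\text{something})$, and after choosing $\mu$ to be the Bernoulli measure with weights $p_i$ one is led to require
\[
\sup_n\sum_{I\in\Sigma_n}p_I^2\,\alpha_d(T_I)^{-s}<\infty,
\]
or a variant balancing $p_I$ against $\alpha_d(T_I)^{d}|\det T_I|$. Optimizing over $(p_i)$ and using the submultiplicativity properties of singular values, this is finite for some $s>d/2$ precisely when $t(T_1,\ldots,T_m)>d$, by the definition \eqref{e-tT}; indeed $t>d$ is equivalent to $\sum_{I\in\Sigma_n}\alpha_d(T_I)^d|\det T_I|>1$ for some $n$, which provides the ``gap'' needed to choose an exponent strictly above $d/2$.

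The third step is bookkeeping: passing from the formal manipulations to rigorous ones (truncating $\mu$ to cylinders of depth $n$, controlling tails, justifying Fubini via the $H^s$ rather than $H^{s}$-of-a-finite-measure subtlety, and handling the fact that $\pi^\ba_*\mu$ need not a priori be absolutely continuous — one works with a finite approximation $\mu_n$, gets uniform $H^s$ bounds, and extracts a weak-* limit whose density is the $H^s$ limit). Finally, the corollary: part~(i) is immediate since $\sum_i\alpha_d(T_i)^d|\det T_i|>1$ is the $n=1$ case of the criterion $t(T_1,\ldots,T_m)>d$; part~(ii) follows because when $T_i=\rho_i O_i$ with $O_i$ orthogonal one has $\alpha_d(T_i)=\rho_i$ and $|\det T_i|=\rho_i^d$, so $\alpha_d(T_i)^d|\det T_i|=\rho_i^{2d}=|\det T_i|^2$, whence $\sum_i|\det T_i|^2>1$ is exactly condition~(i). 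The main obstacle I expect is the second step: getting the oscillatory-integral bound in $\ba$ with the \emph{correct} power (degree $d$, matching the ambient dimension) and with the decay expressed through $\alpha_d(T_I)$ rather than the cruder $\|T_I\|$ — the smallest singular value is what survives, and making the transversality uniform over all pairs $\bi,\bj$ and over $\xi$ in all directions, while keeping the constants independent of $\ba\in B$, is the delicate part that forces the $\|T_i\|<1/2$ hypothesis and the precise shape of the quantity $t(T_1,\ldots,T_m)$.
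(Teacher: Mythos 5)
Your deduction of the corollary itself is correct and coincides with the paper's proof: condition (i) is exactly the $n=1$ case of the criterion $t(T_1,\ldots,T_m)>d$ in Theorem~\ref{thm:general}, and for $T_i=\rho_i O_i$ with $O_i$ orthogonal one has $\alpha_d(T_i)^d\abs{\det T_i}=\rho_i^{2d}=\abs{\det T_i}^2$, so (ii) reduces to (i). The lengthy sketch of the proof of Theorem~\ref{thm:general} preceding this is not needed for the corollary (which simply invokes the theorem), but it is broadly consistent with the paper's actual argument.
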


Next we provide an improvement of Theorem \ref{thm:general} in the special case when the matrices $T_1,\ldots, T_m$ commute.

\begin{thm}\label{thm:abelian}
	Assume that $ \norm{T_{i}} < 1/2  $ for $1\leq i\leq m $.  Moreover, suppose  that  $ T_{i}T_{j} = T_{j}T_{i} $ for all $ 1\leq i,j\leq m$, and  $ \sum_{i=1}^{m} \abs{\det (T_{i})}^{2} > 1 $.
	Then $ K^{\ba} $ has non-empty interior for $ \mathcal L^{md} $-a.e.~$ \ba \in \euclid[md]
	$.
\end{thm}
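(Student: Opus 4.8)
The plan is to reduce the commuting case to an application of Theorem~\ref{thm:general} by showing that the hypotheses $T_iT_j=T_jT_i$ and $\sum_{i=1}^m|\det(T_i)|^2>1$ together force $t(T_1,\ldots,T_m)>d$, i.e.\ that $\sum_{I\in\Sigma_n}\alpha_d(T_I)^d|\det(T_I)|>1$ for some $n$. The key point is that for a single matrix $A$ one has $\alpha_d(A)^d|\det(A)|=\alpha_d(A)^d\cdot\alpha_1(A)\cdots\alpha_d(A)\ge \alpha_d(A)^{2d}$, but this crude bound is not enough; instead I would exploit commutativity to control $\alpha_d(T_I)$ multiplicatively along long words. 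Concretely, since the $T_i$ commute, after a common (complex) change of basis putting them in simultaneous block upper-triangular form, the eigenvalues of $T_I=T_{i_1}\cdots T_{i_n}$ are products of eigenvalues of the individual $T_{i_k}$, and one expects $\alpha_d(T_I)$ to be comparable, on an exponential scale, to $\prod_k \alpha_d(T_{i_k})$ up to a subexponential error coming from the nilpotent (off-diagonal) parts. The main quantitative input will be a statement of the form: for each $\epsilon>0$ there is $C_\epsilon$ with $\alpha_d(T_I)\ge C_\epsilon\,e^{-\epsilon n}\prod_{k=1}^n \alpha_d(T_{i_k})$ for all $I\in\Sigma_n$, or more robustly a Lyapunov-type statement that $\tfrac1n\log\alpha_d(T_I)$ is uniformly close to $\tfrac1n\sum_k\log\alpha_d(T_{i_k})$.

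Granting such a comparison, the computation becomes a product over coordinates. Writing $|\det(T_i)|=\prod_{j=1}^d\alpha_j(T_i)$ and using commutativity to get $|\det(T_I)|=\prod_{k=1}^n|\det(T_{i_k})|$, we would estimate
\[
\sum_{I\in\Sigma_n}\alpha_d(T_I)^d|\det(T_I)|
\;\ge\; C_\epsilon^d\,e^{-d\epsilon n}\sum_{I\in\Sigma_n}\prod_{k=1}^n\alpha_d(T_{i_k})^d|\det(T_{i_k})|
\;=\;C_\epsilon^d\,e^{-d\epsilon n}\Bigl(\sum_{i=1}^m\alpha_d(T_i)^d|\det(T_i)|\Bigr)^n .
\]
This still only recovers Corollary~\ref{cor-1.2}(i), not the sharper condition $\sum|\det T_i|^2>1$, so the comparison above must be strengthened: one needs to track \emph{all} the singular values of $T_I$, not just the smallest. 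The right statement, again via simultaneous triangularization, is that for each $j$ the $j$-th singular value satisfies $\tfrac1n\log\alpha_j(T_I)\approx\tfrac1n\sum_k\log\alpha_{\sigma(j,k)}(T_{i_k})$ for an appropriate matching; after summing the exponents $\alpha_d(T_I)^d|\det(T_I)|=\prod_{j=1}^{d-1}\alpha_j(T_I)\cdot\alpha_d(T_I)^{d+1}\ge\prod_{j=1}^d\alpha_j(T_I)^2=|\det(T_I)|^2$ up to subexponential factors, whence $\sum_{I\in\Sigma_n}\alpha_d(T_I)^d|\det(T_I)|\gtrsim e^{-d\epsilon n}\bigl(\sum_i|\det T_i|^2\bigr)^n$, which exceeds $1$ for large $n$ once $\sum_i|\det T_i|^2>1$ and $\epsilon$ is small.

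The main obstacle, and the step requiring real work, is making the singular-value comparison rigorous and \emph{uniform over all words $I\in\Sigma_n$}: simultaneous triangularization is only over $\C$ and only when the family generates an abelian algebra, the nilpotent parts can a priori contribute polynomially in $n$ (harmless) but the interaction between distinct Jordan blocks and the passage from eigenvalue moduli to singular values needs care, and one must ensure the subexponential error $e^{-d\epsilon n}$ genuinely does not depend on which word is chosen. I would handle this by a compactness argument on the (finite-dimensional) commutative algebra generated by the $T_i$, or alternatively by an explicit estimate of the form $\|N_I\|\le \mathrm{poly}(n)$ for the ``nilpotent part'' of $T_I$ after triangularization, combined with $\alpha_d(T_I)\ge |\det(T_I)|/\bigl(\alpha_1(T_I)\cdots\alpha_{d-1}(T_I)\bigr)$ and $\alpha_1(T_I)\le\|T_I\|\le\mathrm{poly}(n)\prod_k\alpha_1(T_{i_k})$. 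Once $t(T_1,\ldots,T_m)>d$ is established, Theorem~\ref{thm:general} applies verbatim and the conclusion follows.
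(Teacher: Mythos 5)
Your reduction to Theorem~\ref{thm:general} cannot work, and the error is in the central inequality of your second display. You claim
$\alpha_d(T_I)^d|\det(T_I)|=\prod_{j=1}^{d-1}\alpha_j(T_I)\cdot\alpha_d(T_I)^{d+1}\ge\prod_{j=1}^{d}\alpha_j(T_I)^2=|\det(T_I)|^2$,
which after cancellation is the assertion $\alpha_d(T_I)^{d-1}\ge\prod_{j=1}^{d-1}\alpha_j(T_I)$. Since $\alpha_d$ is the \emph{smallest} singular value, this inequality is reversed: one always has $\alpha_d(A)^d\le|\det(A)|$, hence $\alpha_d(T_I)^d|\det(T_I)|\le|\det(T_I)|^2$, with equality only for conformal matrices. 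The discrepancy is genuinely exponential in $n$ and no amount of care with Jordan blocks or Lyapunov exponents repairs it, because the obstruction persists for commuting \emph{diagonal} matrices. Concretely, take $d=2$ and $m=1000$ copies of $T_i=\mathrm{diag}(0.4,\,0.1)$. Then $\sum_{i=1}^m|\det(T_i)|^2=1000\cdot(0.04)^2=1.6>1$, so the hypotheses of Theorem~\ref{thm:abelian} hold, yet
$\sum_{I\in\Sigma_n}\alpha_d(T_I)^d|\det(T_I)|=\bigl(1000\cdot(0.1)^2\cdot 0.04\bigr)^n=(0.4)^n<1$
for all $n$, so $t(T_1,\ldots,T_m)\le d$ and Theorem~\ref{thm:general} simply does not apply. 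This is why the paper states Theorem~\ref{thm:abelian} as an \emph{improvement} of Theorem~\ref{thm:general} in the commuting case: its hypothesis is strictly weaker than $t(T_1,\ldots,T_m)>d$, so any proof that routes through $t>d$ is doomed. (Your first display, which only recovers Corollary~\ref{cor-1.2}(i), is the most that this line of attack can give.)

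The paper's actual argument uses commutativity in a completely different way. Since the $T_i$ commute, the set $\mathcal T_N=\{T_I\colon I\in\Sigma_N\}$ has only polynomially many elements, $\#\mathcal T_N\le(N+1)^m$, so by pigeonhole there is a single matrix $A$ and a subcollection $\mathcal A\subset\Sigma_N$ with $T_I=A$ for all $I\in\mathcal A$ and $(\#\mathcal A)\,|\det A|^t>1$ for some $t>2$. The sub-IFS $\{Ax+a_I\}_{I\in\mathcal A}$ has a single linear part, and by splitting the digits at even and odd positions its attractor $G^{\ba}\subset K^{\ba}$ is written as a sum set $E^{\ba}+F^{\ba}+v^{\ba}$. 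The natural Bernoulli measure on each factor satisfies $\mu([I])\le C\,\phi^{dt/2}(T_I)$ with $dt/2>d$, so by Proposition~\ref{lem:JPS-measure}(ii) both $E^{\ba}$ and $F^{\ba}$ have positive Lebesgue measure for a.e.\ $\ba$, and the Steinhaus theorem gives non-empty interior. If you want to salvage your write-up, this sum-set mechanism (or some other idea exploiting repetition of the linear parts) is the missing ingredient; the singular-value bookkeeping you propose cannot close the gap.
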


The above results provide some sufficient conditions on $(T_1,\ldots, T_m)$ such that $K^\ba$ has non-empty interior for almost all $\ba$. We don't know whether these conditions are sharp.  A natural conjecture is that the conditions of $\|T_i\|<1/2$ for  $1\leq i\leq m$ and $\sum_{i=1}^m |\det(T_i)|>1$ would suffice for $K^\ba$ to have the non-empty interior  for almost all $\ba$.

Now we address  some related works in the literature.  In \cite{Shmerkin2006} Shmerkin investigated a special class of overlapping affine IFSs $\Phi_{\alpha,\beta}=\{\phi_i(x,y)=(\alpha x +d_i, \beta y+d_i)\}_{i=1}^2$ on the plane with $d_1=0$ and $d_2=1$. He proved, among other things,  that there is an open region $V\subset \{(\alpha, \beta)\colon 0<\alpha<\beta<1,\; 2\alpha \beta>1\}$ such that for almost all $(\alpha,\beta)\in V$,  the attractor $K_{\alpha, \beta}$ of $\Phi_{\alpha, \beta}$ has non-empty interior.   %Furthermore, he showed that $$V\supset \{(\alpha, \beta)\colon 0<\alpha<\beta<1,\, \alpha\beta>2^{-1/10}\}.$$
Later Dajani, Jiang and Kempton \cite{DJK2014} showed that there exists a number $C\approx 1.05^{-1}$ such that  $K_{\alpha, \beta}$ has non-empty interior for each pair $(\alpha, \beta)$ satisfying  $C<\alpha<\beta<1$.  The result of Dajani  et al. was subsequently improved and extended in \cite{HareSidorov2016, HareSidorov2017, Baker2020}.  We remark that the interior problem has also been extensively studied for integral self-affine sets (see e.g.~\cite{Bandt1991, LagariasWang1996, HeLauRao2003} and the references therein). Recall that an integral self-affine set  is the attractor of an affine IFS $\{Ax+a_i\}_{i=1}^m$ on $\R^d$ in which $A^{-1}$ is an integral expanding $d\times d$ matrix and $a_i\in \Z^d$ for all $1\leq i\leq m$. In \cite{HeLauRao2003}, He, Lau and Rao produced, among other things, a finite algorithm to determine whether a given integral self-affine set has non-empty interior. It is worth pointing out that there exist self-affine sets of positive Lebesgue measure which have empty interior; see \cite{CJPPS2006} for such examples in the self-similar setting.

For the convenience of the readers, we illustrate the rough ideas in the proofs of Theorems \ref{thm:general} and \ref{thm:abelian}.  Under the assumptions of
Theorem \ref{thm:general}, we first show that there exist a Borel probability measure $\mu$ on $\Sigma$, $C>0$, $t>d$ and $r\in (0,1)$ such that
$$
\mu([I])\leq C \alpha_d(T_I)^t |\det(T_I)| r^n
$$
for all $n\in \N$ and $I\in \Sigma_n$, where $[I]:=\{x=(x_i)_{i=1}^\infty\in \Sigma\colon x_1\cdots x_n=I\}$; see Lemma \ref{lem:ExistMeasure}. Write $\mu^\ba:=\mu\circ (\pi^\ba)^{-1}$ for $\ba\in \R^{md}$. Clearly $\mu^\ba$ is supported on $K^\ba$.  Let $\widehat{\mu^\ba}$ denote the Fourier transform of $\mu^\ba$; see Section \ref{S-2.1}. We manage to prove that
\begin{equation}
\label{e-key1}
\int_{B(0,\rho)}\int_{\R^d} |\widehat{\mu^\ba}(\xi)|^2\|\xi\|^t\, d\xi d\ba<\infty
\end{equation}
for each $\rho>0$, where $B(0,\rho)$ stands for the closed ball in $\R^{md}$ of radius $\rho$ centred at the origin. The proof of  \eqref{e-key1} is based on some key inequalities (see Propositions \ref{coro:StatPhase} and \ref{lem:t}). By \eqref{e-key1}, for almost all $\ba$,
$\int_{\R^d} |\widehat{\mu^\ba}(\xi)|^2\|\xi\|^t\, d\xi<\infty$; which implies that the Sobolev dimension of $\mu^\ba$ is larger than $2d$, hence $K^\ba$ has non-empty interior (see Definition~\ref{de-2.1} and Lemma \ref{lem-Mat}).  This concludes Theorem \ref{thm:general}.  To prove Theorem \ref{thm:abelian},  our main idea is to construct two self-affine sets $E^\ba, F^\ba\subset \R^d$ for each $\ba\in \R^{md}$ such that $K^\ba$ contains a translation of the sum set $E^\ba+F^\ba$; and moreover for almost all $\ba$, $E^\ba$ and $F^\ba$ have positive Lebesgue measure. By the Steinhaus theorem, $K^\ba$ has non-empty interior for almost all $\ba$. In this approach, the commutative assumption on $T_1,\ldots, T_m$ plays a significant role.

It is worth pointing out that by adapting the proof of Theorem \ref{thm:general} we can give an alternative proof of a known result (see Proposition \ref{lem:JPS-measure}) on the Hausdorff dimension and the absolute continuity of projections of measures under the coding map $\pi^\ba$.  This will be illustrated in Section \ref{S6}.

The paper is organized as follows. In Section \ref{S2} we give some preliminaries.   In Section \ref{S3} we prove two key inequalities that are needed in the proof of Theorem \ref{thm:general}.  The proofs of Theorems \ref{thm:general} and \ref{thm:abelian} are given in Sections \ref{S4} and \ref{S5} respectively. In Section \ref{S6} we give an alternative proof   of Proposition \ref{lem:JPS-measure}.

\section{Preliminaries}
\label{S2}
\subsection{Fourier transform,  Soblev energy and Soblev dimension}
\label{S-2.1}
Recall that  the Fourier transform $\widehat{f}$ of a Lebesgue integrable function $f\in L^1(\R^d)$ is defined by $$\widehat{f}(\xi)=\int_{\R^d} e^{-i\innerprod{\xi}{x}}f(x)\,dx,\qquad \xi\in \R^d,$$
where $\langle\cdot,\cdot\rangle$ stands for the usual inner product in $\R^d$.
Similarly, for a finite measure $\mu$ on $\R^d$ with compact support, the Fourier transform of $\mu$ is defined by $$\widehat{\mu}(\xi)=\int e^{-i\innerprod{\xi}{x}}\,d\mu(x)\qquad \xi\in \R^d.$$

Let $\mathcal S(\R^d)$ denote the Schwartz class of rapidly decreasing functions on $\R^d$.  It consists of infinitely differentiable functions on $\R^d$ all of whose derivatives remain bounded when multiplied by any polynomial.  A basic fact in Fourier analysis is that $f\in \mathcal S(\R^d)$ if and only if $\widehat{f}\in \mathcal S(\R^d)$.
Let $C_0^\infty(\R^d)$ denote the collection of infinitely differentiable functions on $\R^d$ with compact support. Clearly $C_0^\infty(\R^d)\subset \mathcal S(\R^d)$.

Let $\mathcal M(\R^d)$ denote the collection of finite Borel measures on $\R^d$ with compact support.  Following Mattila \cite{Mattila2015} and Peres and Schlag \cite{PeresSchlag2000}, we introduce the following.

\begin{de}
\label{de-2.1}
The Sobolev energy of degree $s\in \R$ of a measure $\mu\in \mathcal M(\R^d)$ is
$$
\mathcal I_s(\mu)=\int_{\R^d} |\widehat{\mu}(x)|^2\|x\|^{s-d}\, dx<\infty,
$$
and the Sobolev dimension of $\mu\in \mathcal M(\R^d)$ is
$$
\dim_S \mu=\sup\left\{s\in \R\colon \mathcal I_s(\mu)<\infty\right\},
$$
where we take the convention that $\sup \emptyset=0$.
\end{de}

The following result is needed in the proof of Theorem \ref{thm:general}.
\begin{lem}[{\cite[Theorem 5.4]{Mattila2015}}]
\label{lem-Mat}
Let $\mu\in \mathcal M(\R^d)$ with $\mu\neq 0$. Suppose that $\dim_S\mu>2d$. Then $\mu$ is absolutely continuous with a continuous density, so its support has non-empty interior.
\end{lem}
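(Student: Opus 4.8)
The plan is to deduce everything from the single integrability statement $\widehat{\mu}\in L^1(\R^d)$, which we extract from the hypothesis $\dim_S\mu>2d$. Since $\dim_S\mu>2d$, fix $s$ with $2d<s<\dim_S\mu$; by the definition of the Sobolev dimension, $\mathcal I_s(\mu)=\int_{\R^d}|\widehat\mu(x)|^2\|x\|^{s-d}\,dx<\infty$. To prove $\widehat\mu\in L^1$ I would split the integral over $\{\|\xi\|\le 1\}$ and $\{\|\xi\|>1\}$. On the unit ball use the trivial bound $|\widehat\mu(\xi)|\le \mu(\R^d)<\infty$, so that $\int_{\|\xi\|\le1}|\widehat\mu|\,d\xi<\infty$. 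On $\{\|\xi\|>1\}$ write $|\widehat\mu(\xi)|=\big(|\widehat\mu(\xi)|\,\|\xi\|^{(s-d)/2}\big)\cdot\|\xi\|^{-(s-d)/2}$ and apply the Cauchy--Schwarz inequality to get
$$\int_{\|\xi\|>1}|\widehat\mu(\xi)|\,d\xi\le \mathcal I_s(\mu)^{1/2}\left(\int_{\|\xi\|>1}\|\xi\|^{-(s-d)}\,d\xi\right)^{1/2}.$$
The last integral converges precisely because $s-d>d$, which is exactly where the assumption $s>2d$, hence $\dim_S\mu>2d$, is used. Thus $\widehat\mu\in L^1(\R^d)$.

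Next I would run the Fourier inversion argument to produce the density. Set
$$g(x)=\frac{1}{(2\pi)^d}\int_{\R^d}e^{i\innerprod{\xi}{x}}\widehat\mu(\xi)\,d\xi,$$
which is well defined since $\widehat\mu\in L^1$; by dominated convergence $g$ is continuous, and by the Riemann--Lebesgue lemma it is bounded and vanishes at infinity. The claim is that $d\mu=g\,d\mathcal L^d$. To see this, apply the multiplication formula $\int\widehat\varphi\,d\mu=\int\varphi\,\widehat\mu\,d\mathcal L^d$ with $\varphi=\widehat\psi$ for $\psi\in\mathcal S(\R^d)$; using the inversion identity $\widehat{\widehat\psi}(x)=(2\pi)^d\psi(-x)$ on the left and Fubini's theorem on the right (both legitimate because $\psi\in L^1$ and $\widehat\mu\in L^1$), one arrives at $\int\psi(-x)\,d\mu(x)=\int\psi(-x)g(x)\,dx$. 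As $\psi$ ranges over $\mathcal S(\R^d)$ so does $\psi(-\,\cdot\,)$, hence $\int\phi\,d\mu=\int\phi\,g\,d\mathcal L^d$ for every $\phi\in C_0^\infty(\R^d)$; since $g$ is continuous this forces $d\mu=g\,d\mathcal L^d$, and positivity of $\mu$ gives $g\ge0$ everywhere.

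Finally I would exploit the compact support of $\mu$. If $U$ is an open set with $U\cap \mathrm{supp}\,\mu=\emptyset$, then $\int\phi\,g\,d\mathcal L^d=\int\phi\,d\mu=0$ for all $\phi\in C_0^\infty(U)$, so $g\equiv0$ on $U$ by continuity; thus $g$ is supported on the compact set $\mathrm{supp}\,\mu$, whence $g\in L^1(\R^d)$ and $\mu$ is absolutely continuous with the continuous density $g$. Since $\mu\ne0$, $g$ is not identically zero, so $\{g>0\}$ is a non-empty open subset of $\mathrm{supp}\,\mu$, and therefore $\mathrm{supp}\,\mu$ has non-empty interior.

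The routine parts (the Cauchy--Schwarz splitting, and the polar-coordinates criterion $\int_{\|\xi\|>1}\|\xi\|^{-\alpha}\,d\xi<\infty\iff\alpha>d$) are immediate. The only point requiring genuine care is the identification of $g$ as the density of $\mu$: justifying the interchange of integrals via the multiplication formula and passing from the tested identity to the pointwise statement $d\mu=g\,d\mathcal L^d$. I expect this bookkeeping with Fourier conventions to be the main, though still elementary, obstacle.
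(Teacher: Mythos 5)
The paper does not actually prove this lemma---it is quoted directly from Mattila's book (Theorem 5.4)---and your argument is a correct, self-contained reconstruction of the standard proof behind that citation: Cauchy--Schwarz converts the finiteness of $\mathcal I_s(\mu)$ for some $s>2d$ into $\widehat{\mu}\in L^1(\R^d)$ (the tail integral $\int_{\|\xi\|>1}\|\xi\|^{-(s-d)}\,d\xi$ converging exactly because $s-d>d$), and Fourier inversion plus the multiplication formula then identifies the continuous density. The only points left implicit---the easy monotonicity argument showing $\mathcal I_s(\mu)<\infty$ for every $s\in(2d,\dim_S\mu)$ (split at $\|\xi\|=1$ and use boundedness of $\widehat{\mu}$ near the origin), and the Riesz-representation step upgrading the tested identity $\int\phi\,d\mu=\int\phi g\,d\mathcal L^d$ to equality of measures---are routine and correctly flagged in your write-up.
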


\subsection{Singular value function and affinity dimension}

Let ${\rm Mat}_d(\R)$ denote the set of $d\times d$ real matrices. For $A\in {\rm Mat}_d(\R)$, the singular values
$\alpha_1(A)\geq \cdots\geq \alpha_d(A)$ are the square roots of the eigenvalues of $A^*A$. Alternatively, they are the lengths of the semi-axes of the ellipsoid $A(B(0,1))$, where $B(0,1)$ is the unit ball in $\R^d$.

For $s\geq 0$, we define the singular value function $\phi^s\colon {\rm Mat}_d(\R)\to [0,\infty)$ by
\begin{equation}
\label{e-singular}
	\phi^{s}(A)= \begin{cases}
		\alpha_{1}(A)\cdots\alpha_{\lfloor s \rfloor}(A)\alpha_{k}^{s-\lfloor s \rfloor} & \text{ if } 0\leq s\leq d,\\
		|\det (A)|^{s/d} & \text{ if } s > d,
	\end{cases}
\end{equation}
where $\lfloor s \rfloor$ is the integral part of $s$. Here we make the convention $0^0=1$.
\begin{de}\label{def:affinity}
	Let $  (T_{1}, \ldots, T_{m}) $ be a tuple of $d\times d$ real  matrices. The \textit{affinity dimension} of $(T_{1}, \ldots, T_{m})$  is defined by
	\begin{equation*}
	 	\dim_{\rm AFF}(T_{1}, \ldots, T_{m}) = \inf\left \{ s \geq 0 \colon \sum_{n=1}^{\infty}\sum_{I\in\{1,\ldots,m\}^n} \phi^{s}(T_{I}) < \infty \right \}.
	\end{equation*}
\end{de}

\section{Useful inequalities}
\label{S3}
In this section we establish several inequalities (Propositions \ref{coro:StatPhase},  \ref{lem:t} and \ref{lem:t-d*}), of which the first two are needed in the proof of Theorem \ref{thm:general}, and the third one is needed in the proof of Proposition  \ref{lem:JPS-measure}(i).

For $\ba=(a_1,\ldots, a_m)\in \R^{md}$, let $\pi^{\ba}$ be the coding map associated with the IFS
$\{T_ix+a_i\}_{i=1}^m$; see \eqref{e-pia}.
 For a differentiable function $\phi\colon \R^d\to \R$ and $x\in \R^d$, let $\nabla _x\phi$ denote the gradient of $\phi$ at $x$.
We begin with a simple lemma.
\begin{lem}\label{lem-1}
	Let $ \phi:\R^d\to \R $ be a linear function and  $ \psi\in C_0^\infty(\R^d) $. For $ \lambda > 0 $, let
	\begin{equation*}
		I(\lambda) = \int_{\R^d} e^{-i\lambda \phi(x)} \psi(x) \, dx.
	\end{equation*}
Then for each $N\in \N$, there exists $C=C(\psi, N)>0$  such that
	\begin{equation*}
		|I(\lambda)| \leq \frac{C}{\min\{1, \|\nabla \phi\|\}^N}  (1 + \lambda)^{-N} \quad \text{ for all }\; \lambda >0.
	\end{equation*}
\end{lem}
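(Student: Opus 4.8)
The plan is to prove this by a standard non‑stationary phase argument, i.e.\ repeated integration by parts in the direction of the gradient, together with the trivial $L^1$ bound to cover small $\lambda$. Since $\phi$ is linear, its gradient $\nabla\phi$ is a constant vector $v\in\R^d$; write $g:=\|\nabla\phi\|=\|v\|$. If $v=0$ then $\phi$ is constant and the right‑hand side of the claimed inequality equals $+\infty$ by the convention on division by $0$, so there is nothing to prove. Hence I assume $v\neq 0$ and set $u:=v/\|v\|^2$, so that $\innerprod{v}{u}=1$ and $\|u\|=1/g$.

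The main step is the identity obtained by integration by parts. Let $D_u:=\sum_{j=1}^{d}u_j\,\partial_{x_j}$ denote the directional derivative along $u$. Since $\phi$ is linear, $D_u\bigl(e^{-i\lambda\phi(x)}\bigr)=-i\lambda\,\innerprod{v}{u}\,e^{-i\lambda\phi(x)}=-i\lambda\,e^{-i\lambda\phi(x)}$, so $e^{-i\lambda\phi(x)}=(i\lambda)^{-1}D_u\bigl(e^{-i\lambda\phi(x)}\bigr)$. Because $\psi\in C_0^\infty(\R^d)$ there are no boundary contributions, so integrating by parts $N$ times gives
\[
I(\lambda)=\frac{(-1)^N}{(i\lambda)^N}\int_{\R^d}e^{-i\lambda\phi(x)}\,(D_u)^N\psi(x)\,dx .
\]
Expanding $(D_u)^N=\sum_{|\alpha|=N}\binom{N}{\alpha}u^{\alpha}\,\partial^{\alpha}$ and using $|u^{\alpha}|\le\|u\|^N=g^{-N}$ together with $\sum_{|\alpha|=N}\binom{N}{\alpha}=d^N$, one obtains $|(D_u)^N\psi(x)|\le g^{-N}\,d^{N}\max_{|\alpha|=N}\|\partial^{\alpha}\psi\|_\infty$, and hence, integrating only over $\mathrm{supp}\,\psi$,
\[
|I(\lambda)|\le \frac{C_1(\psi,N)}{g^{N}\,\lambda^{N}},
\]
where $C_1(\psi,N)$ depends only on $N$, on the $L^\infty$‑norms of the order‑$N$ derivatives of $\psi$, and on the Lebesgue measure of $\mathrm{supp}\,\psi$.

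Finally, I would combine this with the obvious estimate $|I(\lambda)|\le\|\psi\|_{L^1}$ and split on the size of $\lambda$. For $0<\lambda\le 1$, use $\|\psi\|_{L^1}\le 2^{N}(1+\lambda)^{-N}\|\psi\|_{L^1}$ together with $\min\{1,g\}^{-N}\ge1$; for $\lambda\ge 1$, use the integration‑by‑parts bound together with $\lambda^{-N}\le 2^{N}(1+\lambda)^{-N}$ and $g^{-N}\le\min\{1,g\}^{-N}$. Taking $C:=2^{N}\max\{\|\psi\|_{L^1},\,C_1(\psi,N)\}$ then yields the assertion in both regimes. There is no genuine obstacle here; the only points requiring a little care are the bookkeeping of constants when expanding $(D_u)^N\psi$ so that exactly the factor $\|u\|^N=\|\nabla\phi\|^{-N}$ is extracted, and arranging the small‑/large‑$\lambda$ split so that the single quantity $\min\{1,\|\nabla\phi\|\}^{-N}(1+\lambda)^{-N}$ dominates throughout. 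The degenerate case $\nabla\phi=0$ is absorbed by the stated convention.
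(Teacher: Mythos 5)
Your argument is correct, and it takes a (mildly but genuinely) different route from the paper. The paper's proof is a two-line reduction: writing $\phi(x)=\innerprod{u}{x}$ with $u=\nabla\phi$, it observes that $I(\lambda)=\widehat{\psi}(\lambda u)$ and then invokes the standard rapid decay $|\widehat{\psi}(\xi)|\leq C(1+\|\xi\|)^{-N}$ of the Fourier transform of a $C_0^\infty$ function, finishing with the same elementary inequality $(1+\lambda\|u\|)^{-N}\leq \min\{1,\|u\|\}^{-N}(1+\lambda)^{-N}$ that you use in your large-/small-$\lambda$ split. You instead unwind that decay estimate by hand: $N$-fold integration by parts along the direction $u=\nabla\phi/\|\nabla\phi\|^2$, multinomial expansion of $(D_u)^N\psi$, and the trivial $L^1$ bound for $\lambda\leq 1$. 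The underlying mechanism is identical (non-stationary phase), but your version is self-contained where the paper's outsources the work to a quoted fact about Schwartz functions, at the cost of some bookkeeping ($|u^\alpha|\leq\|u\|^N$, the factor $d^N$, the support measure). Two cosmetic remarks: your identity $e^{-i\lambda\phi}=(i\lambda)^{-1}D_u(e^{-i\lambda\phi})$ has the wrong sign (it should be $-(i\lambda)^{-1}$, since $D_ue^{-i\lambda\phi}=-i\lambda e^{-i\lambda\phi}$), which is immaterial after taking absolute values; and your explicit disposal of the degenerate case $\nabla\phi=0$ is a point the paper glosses over, so including it is a small improvement rather than a defect.
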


\begin{proof}
Let $N\in \N$. Since $\psi\in C_0^\infty(\R^d)$,  there exists $C=C(\psi, N)>0$ such that
$$
|\widehat{\psi}(\xi)|\leq C(1+\|\xi\|)^{-N}
$$
for $\xi\in \R^d$.
Since $\phi\colon \R^d\to \R$ is linear, there exists $u\in \R^d$ such that  $\phi(x)=\langle u,x\rangle$ for $x\in \R^d$.  Hence  $I(\lambda)=\widehat{\psi}(\lambda u)$ for $\lambda>0$. It follows that for each  $\lambda>0$,
$$
|I(\lambda)|\leq C(1+\|\lambda u\|)^{-N}\leq C\left(1+\lambda\min\{1,\|u\|\}\right)^{-N}\leq \frac{C}{\min\{1, \|u\|\}^N}  (1 + \lambda)^{-N}.
$$
Clearly $\nabla_x\phi=u$ for $x\in \R^d$. This proves the lemma.
\end{proof}

\begin{lem}\label{lem-2}
Assume that $\delta:=\max_{1\leq i\leq m} \|T_{i}\|<1/2 $.  Then
	\begin{equation}\label{key}
		\|{\nabla_{\ba}\innerprod{v}{\pi^{\ba}(x) - \pi^{\ba}(y)}}\| \geq \frac{1-2\delta}{1-\delta}
	\end{equation}
	for all $\ba\in \R^{md}$, $ x = (x_{k})_{k=1}^{\infty}$, $y = (y_{k})_{k=1}^{\infty} \in  \Sigma $ with  $ x_{1} \neq y_{1} $ and $ v \in \R^d $ with $\|v\| =1 $.
\end{lem}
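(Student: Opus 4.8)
The plan is to compute the gradient $\nabla_{\ba}\innerprod{v}{\pi^{\ba}(x)-\pi^{\ba}(y)}$ explicitly using the series representation of $\pi^{\ba}$, and then bound it below by isolating the contribution of the first coordinate block, where $x$ and $y$ differ. First I would record the standard telescoping formula for the coding map: from \eqref{e-pia} one has
$$
\pi^{\ba}(x)=\sum_{k=1}^{\infty} T_{x_1}T_{x_2}\cdots T_{x_{k-1}}\, a_{x_k},
$$
with the $k=1$ term understood as $a_{x_1}$. Thus $\pi^{\ba}(x)$ is an affine function of $\ba=(a_1,\ldots,a_m)$, so $\innerprod{v}{\pi^{\ba}(x)-\pi^{\ba}(y)}$ is affine in $\ba$ and its gradient is a constant vector in $\R^{md}$, independent of $\ba$ — which is why \eqref{key} holds for all $\ba$. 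Writing the gradient block-wise, the $a_j$-block of $\nabla_{\ba}\innerprod{v}{\pi^{\ba}(x)}$ is $\sum_{k:\, x_k=j} (T_{x_1}\cdots T_{x_{k-1}})^{*} v$.

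The key observation is to look only at the $a_{x_1}$-block (equivalently, if $x_1=y_1$ were allowed this would fail, but we are given $x_1\neq y_1$). In the difference $\pi^{\ba}(x)-\pi^{\ba}(y)$, the term coming from $k=1$ in $\pi^{\ba}(x)$ contributes $a_{x_1}$ with coefficient the identity, while in $\pi^{\ba}(y)$ there is no $k=1$ contribution to the $a_{x_1}$-block since $y_1\neq x_1$; the remaining contributions to the $a_{x_1}$-block (from $k\geq 2$ with $x_k=x_1$ or $y_k=x_1$) all carry a product of at least one matrix $T_{i}$, hence have norm at most $\delta$ times a geometric-type sum. Concretely, the $a_{x_1}$-block of the gradient is $v+ (\text{a vector of norm}\le \sum_{k\ge2}\delta^{k-1}\cdot 2 = \tfrac{2\delta}{1-\delta})$; to get the cleaner constant in the statement I would instead group more carefully: the unwanted terms from $\pi^{\ba}(x)$ in the $a_{x_1}$-block have norm $\le \sum_{k\ge 2}\delta^{k-1}=\tfrac{\delta}{1-\delta}$, and likewise for $\pi^{\ba}(y)$. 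Then
$$
\|\nabla_{\ba}\innerprod{v}{\pi^{\ba}(x)-\pi^{\ba}(y)}\|\ \ge\ \|v\| - \frac{\delta}{1-\delta}-\frac{\delta}{1-\delta}
$$
would give $1-\tfrac{2\delta}{1-\delta}=\tfrac{1-3\delta}{1-\delta}$, which is not quite the claimed bound, so the grouping must be done so that only one copy of the geometric tail appears. The right way: collect the full $a_{x_1}$-block as $v+w$ where $w=\sum_{k\ge 2,\,x_k=x_1}(T_{x_1}\cdots T_{x_{k-1}})^* v-\sum_{k\ge 2,\,y_k=x_1}(T_{y_1}\cdots T_{y_{k-1}})^* v$; since for each $n\ge 1$ at most one index $k$ with $x_k=x_1$ has $k-1=n$ and similarly on the $y$ side, the two sums together contribute at most one matrix-product term of each length $n\ge1$ after cancellation bookkeeping — more simply, estimate $\|w\|\le \sum_{k\ge2}\delta^{k-1}=\tfrac{\delta}{1-\delta}$ for \emph{each} of the two self-affine sums would be wasteful, so instead I bound the whole of $\pi^{\ba}(x)-\pi^{\ba}(y)$ restricted to the $a_{x_1}$ coordinate directly as $v$ plus a sum over $k\ge 2$ of terms, getting $\ge 1-\tfrac{\delta}{1-\delta}=\tfrac{1-2\delta}{1-\delta}$ by observing the tail is a single geometric series with ratio $\delta$. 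This last step is the place where the bookkeeping must be done with care.

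The main obstacle, therefore, is purely combinatorial-analytic: organizing the $a_{x_1}$-block of the gradient so that the ``error'' beyond the leading $v$ is controlled by a \emph{single} geometric series $\sum_{k\ge1}\delta^k=\delta/(1-\delta)$ rather than two, in order to land exactly on the constant $(1-2\delta)/(1-\delta)$. I expect this to work because in $\pi^{\ba}(x)-\pi^{\ba}(y)$ one can write the $a_{x_1}$-derivative as $v - \big(\text{stuff}\big)$ where the ``stuff'' is, after using $\|T_I\|\le\delta^{|I|}$ and the fact that for each length $n$ there is at most one word of that length whose associated term lands in the $a_{x_1}$-block (from either $x$ or $y$, but the leading $n=0$ term of $y$ is absent because $y_1\neq x_1$), bounded in norm by $\sum_{n\ge1}\delta^n\|v\|=\delta/(1-\delta)$. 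Once the $a_{x_1}$-block has norm $\ge 1-\delta/(1-\delta)=(1-2\delta)/(1-\delta)$, the full $\R^{md}$-gradient has norm at least as large, and since $\delta<1/2$ this lower bound is positive, completing the proof. No deep input is needed beyond the explicit series for $\pi^{\ba}$ and the submultiplicativity $\|T_I\|\le\prod\|T_{i_j}\|\le\delta^{|I|}$.
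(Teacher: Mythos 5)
Your setup is correct---the gradient is constant in $\ba$, and block-wise the $a_j$-component of $\nabla_{\ba}\innerprod{v}{\pi^{\ba}(x)-\pi^{\ba}(y)}$ is $v^{t}U_j$ for explicit matrices $U_j$ built from the products $T_{x|k}$, $T_{y|k}$---but the step you yourself flag as delicate is where the argument breaks. Your claim that ``for each length $n$ there is at most one word of that length whose associated term lands in the $a_{x_1}$-block (from either $x$ or $y$)'' is false for $n\ge 1$: if $x_{n+1}=x_1$ and also $y_{n+1}=x_1$ (perfectly possible, e.g.\ $x=111\cdots$ and $y=211\cdots$), then the $a_{x_1}$-block receives both $(T_{x_1}\cdots T_{x_n})^{*}v$ and $-(T_{y_1}\cdots T_{y_n})^{*}v$, two different matrices applied to $v$ with no cancellation guaranteed. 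So the honest bound on the perturbation of the $a_{x_1}$-block alone is $2\sum_{n\ge 1}\delta^{n}=2\delta/(1-\delta)$, which yields only $(1-3\delta)/(1-\delta)$---Falconer's original $\delta<1/3$ threshold, i.e.\ exactly the weaker bound you computed first and then tried to improve away.

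The missing idea (Solomyak's, and the one the paper uses) is a pigeonhole across the \emph{two} blocks $a_{x_1}$ and $a_{y_1}$. Writing $U_{x_1}=\mathbf{I}_d+A$ and $U_{y_1}=-\mathbf{I}_d+B$, one has $\norm{A}+\norm{B}+\sum_{j\ne x_1,y_1}\norm{U_j}\le 2\sum_{k\ge 1}\delta^{k}=2\delta/(1-\delta)<2$, because each term $T_{x|k}$ or $T_{y|k}$ contributes to exactly one block. Hence at least one of $\norm{A},\norm{B}$ is at most $\delta/(1-\delta)$, and for \emph{that} block $\norm{v^{t}U}\ge 1-\delta/(1-\delta)=(1-2\delta)/(1-\delta)$ (the paper reaches the same constant via $\alpha_d(U)=\norm{U^{-1}}^{-1}$ and a Neumann series, but the triangle inequality already suffices). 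One cannot decide in advance which of the two blocks is the good one, which is why an argument confined from the outset to the $a_{x_1}$-block cannot reach the constant $(1-2\delta)/(1-\delta)$; this pigeonhole is precisely what upgrades the norm threshold from $1/3$ to $1/2$.
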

\begin{proof}
The core of our proof follows closely  the proof of \cite[Proposition 3.1]{Solomyak1998}.
Let $ x = (x_{k})_{k=1}^{\infty}$, $y = (y_{k})_{k=1}^{\infty} \in  \Sigma $ with  $ x_{1} \neq y_{1} $,  $ v \in \R^d $ with $\|v\| =1 $ and $\ba=(a_1,\ldots, a_m)\in \R^{md}$.
	Without loss of generality we may assume that $ x_{1} = 1 $ and $ y_{1} = 2 $.
Write ${\bf I}_d:={\rm diag}(\underbrace {1,\ldots, 1}_{d})$. Then
	\begin{align}
\label{e-l1}
		\pi^{\ba}(x) - \pi^{\ba}(y) & = a_{1} - a_{2} + \sum_{k=1}^{\infty} T_{x|k} a_{x_{k+1}} - \sum_{k=1}^{\infty} T_{y|k} a_{y_{k+1}} = \sum_{j=1}^{m} U_{j} a_{j},
  	\end{align}
  where $U_1,\ldots, U_m$ are $d\times d$ matrices defined by
  \begin{equation}
  \label{e-l2}
  \begin{split}
  U_1&={\bf I}_d+\sum_{k\geq 1:\; x_{k+1}=1}T_{x|k}-\sum_{p\geq 1:\; y_{p+1}=1}T_{y|p+1},\\
  U_2&=-{\bf I}_d+\sum_{k\geq 1:\; x_{k+1}=2}T_{x|k}-\sum_{p\geq 1:\; y_{p+1}=2}T_{y|p+1},\\
  U_j&=\sum_{k\geq 1:\; x_{k+1}=j}T_{x|k}-\sum_{p\geq 1:\; y_{p+1}=j}T_{y|p+1} \quad \mbox{ for }3\leq j\leq m.
  \end{split}
  \end{equation}
  Set $A=U_1-{\bf I}_d$ and $B=U_2+{\bf I}_d$.  By \eqref{e-l2},
    \begin{equation*}
  	\norm{A} + \norm{B} + \sum_{j=3}^{m} \norm{U_{j}} \leq   \sum_{k=1}^{\infty} \norm{T_{x|k}}  + \sum_{p=1}^{\infty} \norm{T_{y|p}} \leq 2\sum_{k=1}^\infty \delta^k= \frac{2\delta}{1 - \delta}<2.
  \end{equation*}
   Hence either $ \norm{A} \leq \delta / (1 - \delta) $ or $ \norm{B} \leq \delta / (1 - \delta) $. Suppose that $ \norm{A} \leq \delta / (1 - \delta) $  whilst the other case follows from the same argument. Then
   \begin{equation}
   \label{e-l3}
   	\norm{U_{1}^{-1}} = \norm{({\bf I}_d - A)^{-1}} \leq \sum_{k=0}^{\infty} \norm{A}^{k} \leq \sum_{k=0}^\infty \left(\frac{\delta}{1-\delta}\right)^k= \frac{1 -\delta }{1 - 2\delta}.
   \end{equation}
   By \eqref{e-l1},
   \begin{align*}
   		\nabla_{\ba} \innerprod{v}{\pi^{\ba}(x) - \pi^{\ba}(y)} & = \nabla_{\ba}  \left\langle v,\;\sum_{j=1}^{m} U_{j} a_{j}\right\rangle =  (v^{t}U_{1}, \ldots, v^{t}U_{m}),
   \end{align*}
   where $v^t$ denotes the transpose of $v$.
   Since $ \|v\| = 1 $, it follows that
   $$
   \norm{\nabla_{\ba} \innerprod{v}{\pi^{\ba}(x) - \pi^{\ba}(y)} }  \geq \|v^{t} U_{1}\|\geq \alpha_d(U_1)=
    \norm{U_{1}^{-1}}^{-1} \geq \frac{1 -2 \delta }{1 - \delta},
   $$
	as desired.
\end{proof}

Now we are ready to deduce an important integral estimate. For $x, y\in \Sigma$, let $x\wedge y$ denote the common initial segment of $x$ and $y$.
\begin{pro}\label{coro:StatPhase}
	Assume that $\delta:=\max_{1\leq i\leq m} \|T_{i}\|<1/2 $.   Let $ \psi \in C_{0}^{\infty}(\R^{md}) $ and  $ N \in \Bbb N $. Then there exists $ C=C(\psi, N,\delta) > 0 $ such that
	\begin{equation}
\label{e-l4}
		\left|\int_{\R^{md}} \psi(\ba) e^{-i\innerprod{\xi}{\pi^{\ba}(x) - \pi^{\ba}(y)}} \, d\ba\right|  \leq  C\left(1+ \|T_{x\wedge y}^{\ast} \xi\|\right)^{-N}
	\end{equation}
for all $\xi\in \R^d$ and $x,y\in \Sigma$ with $x\neq y$, where $T_{x\wedge y}^{\ast}$ stands for the transpose of $T_{x\wedge y}$.
\end{pro}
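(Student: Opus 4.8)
The plan is to reduce the oscillatory integral over $\R^{md}$ to a one-dimensional problem that Lemma~\ref{lem-1} can handle, after first factoring out the common prefix $x\wedge y$. Write $x\wedge y = I \in \Sigma_n$ (possibly $n=0$, i.e.\ $I$ empty), so that $x = I x'$, $y = I y'$ with $x',y'\in\Sigma$ having distinct first symbols. The key algebraic identity is the self-affinity of the coding map: $\pi^{\ba}(Iz) = f^{\ba}_I(\pi^{\ba}(z)) = T_I\,\pi^{\ba}(z) + (\text{terms depending on }\ba)$, where $f^{\ba}_I = f^{\ba}_{i_1}\circ\cdots\circ f^{\ba}_{i_n}$. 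Hence
\[
\pi^{\ba}(x) - \pi^{\ba}(y) = T_I\bigl(\pi^{\ba}(x') - \pi^{\ba}(y')\bigr),
\]
so that $\innerprod{\xi}{\pi^{\ba}(x) - \pi^{\ba}(y)} = \innerprod{T_I^{\ast}\xi}{\pi^{\ba}(x') - \pi^{\ba}(y')}$. Setting $\eta := T_I^{\ast}\xi$, it suffices to bound
\[
\left|\int_{\R^{md}} \psi(\ba)\, e^{-i\innerprod{\eta}{\pi^{\ba}(x') - \pi^{\ba}(y')}}\,d\ba\right| \leq C(1+\|\eta\|)^{-N}
\]
uniformly in $\eta\in\R^d$ and in $x',y'\in\Sigma$ with $x'_1\neq y'_1$; this is exactly \eqref{e-l4} in the case where the common prefix is empty, and the general case follows by this substitution.

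To prove the reduced estimate, fix $\eta\neq 0$ (the case $\eta=0$ being trivial since the integral is then bounded by $\|\psi\|_{L^1}$) and write $\eta = \lambda v$ with $\lambda = \|\eta\|>0$ and $\|v\|=1$. The phase is $\phi_{x',y',v}(\ba) := \innerprod{v}{\pi^{\ba}(x') - \pi^{\ba}(y')}$, which by \eqref{e-l1} is a \emph{linear} (indeed affine) function of $\ba\in\R^{md}$ --- more precisely it equals $\innerprod{v}{\sum_j U_j a_j} = (v^t U_1,\ldots,v^t U_m)\cdot\ba$, so $\nabla_{\ba}\phi_{x',y',v} = (v^tU_1,\ldots,v^tU_m)$ is a constant vector. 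Lemma~\ref{lem-1} (applied in dimension $md$ in place of $d$) then gives, for each $N$,
\[
\left|\int_{\R^{md}} \psi(\ba)\, e^{-i\lambda\phi_{x',y',v}(\ba)}\,d\ba\right| \;\leq\; \frac{C(\psi,N)}{\min\{1,\;\|\nabla\phi_{x',y',v}\|\}^N}\,(1+\lambda)^{-N}.
\]
By Lemma~\ref{lem-2}, $\|\nabla_{\ba}\phi_{x',y',v}\| \geq (1-2\delta)/(1-\delta) =: c_\delta > 0$, a constant depending only on $\delta$, so $\min\{1,\|\nabla\phi\|\}^{-N} \leq \max\{1, c_\delta^{-1}\}^N$, which is absorbed into the constant. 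This yields the bound $C(\psi,N,\delta)(1+\|\eta\|)^{-N}$, and substituting back $\eta = T_I^{\ast}\xi = T_{x\wedge y}^{\ast}\xi$ completes the proof.

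The only genuinely substantive inputs are already in hand: the linearity of the phase in $\ba$ (from the explicit expansion \eqref{e-l1}--\eqref{e-l2}) and the uniform lower bound on its gradient (Lemma~\ref{lem-2}, which is where $\delta<1/2$ is used). The one point requiring a little care is the self-affine factorization step: one should check that $\pi^{\ba}(Iz)=f^{\ba}_I(\pi^{\ba}(z))$ follows directly from the defining limit \eqref{e-pia}, and confirm that after the substitution $\eta=T_I^{\ast}\xi$ the residual pair $x',y'$ still has distinct first symbols so that Lemma~\ref{lem-2} applies --- both are immediate. I do not anticipate any real obstacle; the proposition is essentially a packaging of Lemmas~\ref{lem-1} and~\ref{lem-2} together with the scaling structure of the IFS.
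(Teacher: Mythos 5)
Your proposal is correct and follows essentially the same route as the paper: factor out the common prefix via $\pi^{\ba}(x)-\pi^{\ba}(y)=T_{x\wedge y}\bigl(\pi^{\ba}(\sigma^n x)-\pi^{\ba}(\sigma^n y)\bigr)$, normalize $T_{x\wedge y}^{\ast}\xi$ to a unit vector, and apply Lemma~\ref{lem-1} with $\lambda=\|T_{x\wedge y}^{\ast}\xi\|$ together with the gradient lower bound of Lemma~\ref{lem-2}. The only cosmetic difference is that you treat the degenerate case $\xi=0$ explicitly, which the paper leaves implicit.
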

\begin{proof}
	Let $\xi\in \R^d\backslash\{0\}$ and $x,y\in \Sigma$ with $x\neq y$. Let $n=|x\wedge y|$ be the word length of $x\wedge y$. Write $$ v_{\xi, x,y} = \frac{T_{x\wedge y}^{\ast}\xi}{\|T_{x\wedge y}^{\ast}\xi\|}.
$$
Then
\begin{align*}
\innerprod{\xi}{\pi^{\ba}(x) - \pi^{\ba}(y)}&=\innerprod{\xi}{T_{x\wedge y}(\pi^{\ba}(\sigma^n x) - \pi^{\ba}(\sigma^n y))}\\
&=\innerprod{T_{x\wedge y}^{\ast}\xi}{\pi^{\ba}(\sigma^n x) - \pi^{\ba}(\sigma^n y)}\\
&=\|T_{x\wedge y}^{\ast}\xi\| \innerprod{v_{\xi,x,y}}{\pi^{\ba}(\sigma^n x) - \pi^{\ba}(\sigma^n y)}.
\end{align*}
Defining $\phi\colon\R^{md}\to \R$ by $\phi(\ba)=\innerprod{v_{\xi,x,y}}{\pi^{\ba}(\sigma^n x) - \pi^{\ba}(\sigma^n y)}$, we get
	\begin{equation}\label{eq:transferStatPhase}
		\int_{\R^{md}} \psi(\ba) e^{-i\innerprod{\xi}{\pi^{\ba}(x) - \pi^{\ba}(y)}} \, d\ba = \int_{\R^{md}} \psi(\ba) e^{-i\|T_{x\wedge y}^{\ast}\xi\| \phi(\ba)} \, d\ba.
	\end{equation}
Notice that $\phi$ is linear (cf.~\eqref{e-l1}). So by Lemma \ref{lem-2}, $\|\nabla_\ba \phi\|\geq (1-2\delta)/(1-\delta)$. Applying Lemma \ref{lem-1} (in which we take $\lambda=\|T_{x\wedge y}^{\ast}\xi\|$) yields that for each $N\in \N$, there exists $C=C(N, \psi, \delta)>0$ so that
\begin{equation*}
	\left|\int_{\R^{md}} \psi(\ba) e^{-i\|T_{x\wedge y}^{\ast}\xi\| \phi(\ba)}\, d\ba\right|\leq C (1+\|T_{x\wedge y}^{\ast}\xi\|)^{-N}.
\end{equation*}
Combining it with \eqref{eq:transferStatPhase} completes the proof.
\end{proof}

In the remaining part of this section, we shall mean by $ a \lesssim_{\varepsilon} b $ that $ a \leq Cb $ for some positive constant $ C $ depending on $ \varepsilon $. We write $ a \approx_{\varepsilon} b $ if $ a \lesssim_{\varepsilon} b $ and $ b \lesssim_{\varepsilon} a $.

%If it is clear from the context what $ C $ should depend on, then we may briefly write by $ a \lesssim b $ or $ a\approx b $.

For $d\in \N$, let ${\rm GL}(d, \R)$ denote the collection of $d\times d$ invertible real matrices.
\begin{pro}\label{lem:t}
	Let $ d \in \Bbb N $,   $ t \geq 0 $ and $ N > t + d $. Then
	\begin{equation*}
		\int_{\euclid[d]} (1 + \|Tx\|)^{-N} \|x\|^{t} \, dx \approx_{N,d,t} \frac{1}{\alpha_{d}(T)^{t} \abs{\det (T)}}
	\end{equation*}
for $T\in {\rm GL}(d,\R)$.
\end{pro}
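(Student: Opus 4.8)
The plan is to reduce to the case where $T$ is diagonal and then compute the integral explicitly as a product of one-dimensional integrals. First I would use the singular value decomposition $T = R_1 D R_2$, where $R_1, R_2$ are orthogonal and $D = \mathrm{diag}(\alpha_1(T), \ldots, \alpha_d(T))$ with $\alpha_1(T) \geq \cdots \geq \alpha_d(T) > 0$. Substituting $x \mapsto R_2^{-1} x$ leaves both $\|x\|$ and Lebesgue measure invariant, and $\|T R_2^{-1} x\| = \|R_1 D x\| = \|Dx\|$, so the left-hand side equals $\int_{\euclid[d]} (1 + \|Dx\|)^{-N} \|x\|^t \, dx$. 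Likewise $\alpha_d(T) = \alpha_d(D)$ and $|\det T| = |\det D|$, so it suffices to prove the estimate for $T = D$ diagonal with positive entries $\lambda_1 \geq \cdots \geq \lambda_d > 0$ (writing $\lambda_j = \alpha_j(T)$).

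Next I would perform the anisotropic substitution $y_j = \lambda_j x_j$, under which $dx = (\prod_j \lambda_j)^{-1} dy = |\det D|^{-1} dy$ and $\|Dx\| = \|y\|$. The integral becomes
\begin{equation*}
\frac{1}{|\det D|} \int_{\euclid[d]} (1 + \|y\|)^{-N} \left\| (y_1/\lambda_1, \ldots, y_d/\lambda_d) \right\|^t \, dy.
\end{equation*}
The remaining task is to show that this last integral is $\approx_{N,d,t} \lambda_d^{-t}$. For the upper bound, since $\lambda_d \leq \lambda_j$ for all $j$, we have $\|(y_1/\lambda_1, \ldots, y_d/\lambda_d)\| \leq \lambda_d^{-1} \|y\|$, so the integral is bounded by $\lambda_d^{-t} \int_{\euclid[d]} (1+\|y\|)^{-N} \|y\|^t \, dy$, which converges because $N > t + d$ (pass to polar coordinates: $\int_0^\infty (1+r)^{-N} r^{t+d-1} \, dr < \infty$). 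For the lower bound, I would restrict the domain of integration to a region where $|y_d| \geq \frac12 \|y\|$ and $\|y\| \leq 1$, say; there $\|(y_1/\lambda_1, \ldots, y_d/\lambda_d)\| \geq |y_d|/\lambda_d \geq \frac12 \lambda_d^{-1} \|y\|$, and $(1+\|y\|)^{-N} \geq 2^{-N}$, so the integral is bounded below by a constant (depending only on $N, d, t$) times $\lambda_d^{-t}$, since the solid angle of the cone $\{|y_d| \geq \frac12\|y\|\}$ intersected with the unit ball has positive, dimension-dependent measure and contributes $\int r^{t+d-1}\,dr$ over $r \in (0,1)$.

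I do not anticipate a serious obstacle here; the argument is a standard change-of-variables computation. The one point requiring a little care is making sure the implied constants genuinely depend only on $N$, $d$, $t$ and not on $T$ — this is automatic once the problem is reduced to the diagonal case and the $\lambda_j$'s are scaled out, since after the substitution $y_j = \lambda_j x_j$ every surviving quantity except the explicit factors $\lambda_d^{-t}$ and $|\det D|^{-1}$ is a fixed integral over $\euclid[d]$. The convergence condition $N > t+d$ is exactly what is needed for the radial integral $\int_0^\infty (1+r)^{-N} r^{t+d-1}\, dr$ to be finite, which is why that hypothesis appears.
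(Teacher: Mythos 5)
Your proposal is correct and follows essentially the same route as the paper: both reduce via the substitution $y=Tx$ (equivalently, diagonalization plus the anisotropic rescaling) to showing that $\int_{\R^d}(1+\|y\|)^{-N}\|T^{-1}y\|^{t}\,dy\approx_{N,d,t}\alpha_d(T)^{-t}$. The only (harmless) difference is in that last estimate: the paper uses the equivalence $\bigl(\sum_i\beta_i^2y_i^2\bigr)^{t/2}\approx_{d,t}\sum_i\beta_i^t|y_i|^t$ with $\beta_i=1/\alpha_i(T)$ and evaluates the resulting $d$ convergent integrals, whereas you bound $\|T^{-1}y\|$ above by $\|y\|/\alpha_d(T)$ and below on a cone about the $y_d$-axis; both are valid and give constants depending only on $N,d,t$.
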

\begin{proof}
	Substituting  $ y = Tx $ gives
	\begin{equation}\label{eq:t-change}
		\int_{\euclid[d]} (1 + \|Tx\|)^{-N} \|x\|^{t}\, dx = \frac{1}{\abs{\det (T)}}  \int_{\euclid[d]} (1 + \|y\|)^{-N} \|T^{-1}y\|^{t} \, dy.
	\end{equation}

	Let $ \beta_{i} = 1/\alpha_{i}(T) $ for $ i = 1, \ldots, d $. Then $ \beta_{1}^{2} , \ldots, \beta_{d}^{2} $  are the eigenvalues of $ (T^{-1})^{\ast}(T^{-1})$.  Choosing coordinate axes in the directions of the eigenvectors of $ (T^{-1})^{\ast}(T^{-1})$ corresponding to $\beta_1^2,\ldots, \beta_d^2$, we obtain
	\begin{equation}\label{eq:t-change'}
\begin{split}
   \int_{\euclid[d]} (1 + \|y\|)^{-N} \|T^{-1}y\|^{t} \, dy&=\int\cdots \int_{\euclid[d]} (1 + \|y\|)^{-N} \left(\beta_{1}^{2}y_{1}^{2}+\cdots+\beta_{d}^{2}y_{d}^{2}\right)^{t/2} \, dy_{1}\cdots dy_{d}\\
    & \approx_{d, t} \int\cdots \int_{\euclid[d]} (1 + \|y\|)^{-N} \left(\beta_{1}^{t}y_{1}^{t}+\cdots+\beta_{d}^{t}y_{d}^{t}\right) \, dy_{1}\cdots dy_{d}\\
    &= \sum_{i=1}^{d}\beta_{i}^{t} c_{i},
\end{split}	
\end{equation}
where $ c_{i}:= \int_{\euclid[d]} (1 + \|y\|)^{-N} \abs{y_{i}}^{t} \, dy_{1}\cdots dy_{d}$.
Since $  c_{i} \approx_{N,d,t} 1 $ by $ N > d + t $, it follows from \eqref{eq:t-change}, \eqref{eq:t-change'} that
	\begin{align*}
		\int_{\euclid[d]} (1 + \|Tx\|)^{-N} \|x\|^{t}\, dx
		& \approx_{d,t} \frac{1}{\abs{\det (T)}}  \sum_{i=1}^{d}\beta_{i}^{t} c_{i} \\
		& \approx_{N, d, t} \frac{1}{\abs{\det (T)}} \max_{i\in \{1,\ldots, d\} } \beta_{i}^{t}\\
& = \frac{1}{\alpha_{d}(T)^{t}\abs{\det (T)}},
	\end{align*}
which completes the proof of the proposition.
\end{proof}

As a complement of Proposition \ref{lem:t}, we have the following.
\begin{pro}\label{lem:t-d*}
	Let  $ d \in \Bbb N $, $ t \in (0, d) \backslash \Bbb Z $ and $ N > t $. Then
	\begin{equation*}
		\int_{\euclid[d]} (1 + \|{Tx}\|)^{-N} \|x\|^{t-d} \, dx \approx_{N,d,t} \frac{1}{\phi^{t}(T)}
	\end{equation*}
	for  $ T\in {\rm GL}(d,\R) $.
\end{pro}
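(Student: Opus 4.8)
The plan is to reduce Proposition \ref{lem:t-d*} to an elementary anisotropic integral estimate, exactly as in the proof of Proposition \ref{lem:t}, but now being careful about the non-integer exponent $t-d$, which is negative. First I would perform the substitution $y=Tx$, giving
\begin{equation*}
\int_{\euclid[d]} (1+\|Tx\|)^{-N}\|x\|^{t-d}\,dx=\frac{1}{\abs{\det(T)}}\int_{\euclid[d]}(1+\|y\|)^{-N}\|T^{-1}y\|^{t-d}\,dy,
\end{equation*}
and then diagonalize $(T^{-1})^{\ast}(T^{-1})$, writing $\beta_i=1/\alpha_i(T)$ and choosing coordinates along its eigenvectors, so that $\|T^{-1}y\|^{2}=\sum_i\beta_i^2 y_i^2$. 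Thus the task is to estimate $\int (1+\|y\|)^{-N}\bigl(\sum_i\beta_i^2y_i^2\bigr)^{(t-d)/2}\,dy$ from above and below, up to constants depending only on $N,d,t$.

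The key point is to compare $\bigl(\sum_i\beta_i^2y_i^2\bigr)^{(t-d)/2}$ with an explicit product expression in the $\beta_i$'s. Since $0<t<d$ we have $(t-d)/2\in(-d/2,0)$. Ordering $\beta_1\le\cdots\le\beta_d$ (i.e.\ $\alpha_1\ge\cdots\ge\alpha_d$), the bound $\sum_i\beta_i^2y_i^2\ge\beta_k^2 y_k^2$ for each $k$, combined with a partition of $\R^d$ according to which coordinate $\beta_k|y_k|$ is largest, should yield $\bigl(\sum_i\beta_i^2y_i^2\bigr)^{(t-d)/2}\approx_d\sum_k\bigl(\beta_k|y_k|\bigr)^{t-d}$ with the caveat that for the \emph{upper} bound one uses subadditivity-type inequalities for the negative power (the function $r\mapsto r^{(t-d)/2}$ is decreasing, so a lower bound on $\sum\beta_i^2y_i^2$ gives an upper bound on its power); a cleaner route is to integrate directly. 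Concretely, I expect to show
\begin{equation*}
\int_{\euclid[d]}(1+\|y\|)^{-N}\Bigl(\sum_{i=1}^d\beta_i^2y_i^2\Bigr)^{(t-d)/2}\,dy\approx_{N,d,t}\prod_{j=1}^{d}\min\{1,\beta_j\}^{?}\cdots,
\end{equation*}
but in fact the expected clean answer is that the integral is $\approx_{N,d,t}\prod_{i=1}^{d}\beta_i\cdot\min_{i}\beta_i^{-?}$; more precisely, after the $\abs{\det(T)}^{-1}=\prod_i\beta_i$ factor is reinstated, one wants the product of $\beta_1\cdots\beta_d$ with a correction that reproduces $\phi^t(T)^{-1}=\bigl(\alpha_1(T)\cdots\alpha_{\lfloor t\rfloor}(T)\,\alpha_{\lceil t\rceil}(T)^{t-\lfloor t\rfloor}\bigr)^{-1}$. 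In terms of $\beta_i$, writing $\ell=\lfloor t\rfloor$, this is $\beta_1\cdots\beta_\ell\,\beta_{\ell+1}^{t-\ell}$, so the integral over $y$ alone should come out $\approx\beta_{\ell+1}^{t-\ell}\beta_{\ell+2}\cdots\beta_d/(\beta_1\cdots\beta_\ell\cdot\beta_1\cdots\beta_d)$... I will instead organize the computation as an iterated one-dimensional integral.

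The cleanest execution: integrate one variable at a time. Relabel so $\beta_1\le\cdots\le\beta_d$. Rescale $z_i=\beta_i y_i$; then $\|y\|^2=\sum z_i^2/\beta_i^2\ge\sum z_i^2/\beta_d^2$, and $dy=(\beta_1\cdots\beta_d)^{-1}dz=\abs{\det T}\,dz$, which cancels the outer $\abs{\det T}^{-1}$, leaving
\begin{equation*}
\int_{\euclid[d]}\Bigl(1+\bigl\|(z_1/\beta_1,\ldots,z_d/\beta_d)\bigr\|\Bigr)^{-N}\|z\|^{t-d}\,dz.
\end{equation*}
Near $z=0$ the integrand behaves like $\|z\|^{t-d}$, which is integrable in $\R^d$ precisely because $t-d>-d$ (this is where $t>0$ is used, and where the non-integer hypothesis $t\notin\Z$ is in fact not needed for convergence but is kept for the clean closed form); the tail decay is governed by $N>t$. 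The main obstacle, and the only genuinely non-routine step, is showing that the anisotropic factor $\bigl\|(z_1/\beta_1,\ldots,z_d/\beta_d)\bigr\|$ effectively truncates integration to the box $|z_i|\lesssim\beta_i$ (for those $i$ with $\beta_i\le 1$; large $\beta_i$ contribute unit-order), and that on this box $\int\|z\|^{t-d}\,dz$ over $\prod_i[-\beta_i,\beta_i]$ is $\approx_{d,t}\beta_1\cdots\beta_\ell\,\beta_{\ell+1}^{\,t-\ell}\cdot(\text{bounded factors})$ — here the ordering $\beta_1\le\cdots\le\beta_d$ makes the box "thin" in the first $\ell$ directions, the singularity $\|z\|^{t-d}$ is integrated against the $\ell$ thin directions freely and against the $(\ell+1)$-st direction up to length $\beta_{\ell+1}$, reproducing exactly $\phi^t(T)^{-1}=(\alpha_1\cdots\alpha_\ell\,\alpha_{\ell+1}^{t-\ell})^{-1}$. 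The hypothesis $t\notin\Z$ guarantees $\lfloor t\rfloor<t<\lceil t\rceil$ so that the exponent $t-\ell\in(0,1)$ and the one-dimensional integral $\int_0^{\beta_{\ell+1}}r^{t-\ell-1}\,dr\approx\beta_{\ell+1}^{t-\ell}$ is both finite and nonzero, avoiding the logarithmic borderline cases.
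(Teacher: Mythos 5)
Your reduction is sound and, after unwinding, coincides with the paper's: substituting $y=Tx$, diagonalizing $(T^{-1})^{\ast}T^{-1}$ and rescaling $z_i=\beta_i y_i$ lands you exactly on the integral $\int_{\R^d}\bigl(1+\|(\alpha_1z_1,\ldots,\alpha_dz_d)\|\bigr)^{-N}\|z\|^{t-d}\,dz$ that the paper obtains in one step by diagonalizing $T^{\ast}T$ in the $x$-variable. The target value $\beta_1\cdots\beta_k\,\beta_{k+1}^{t-k}=\phi^t(T)^{-1}$ (with $k=\lfloor t\rfloor$) and the role of $t\notin\Z$ in avoiding logarithmic borderline cases are also correctly identified. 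The difficulty is that the proposal stops precisely where the proof has to begin: you declare that ``the main obstacle, and the only genuinely non-routine step'' is to show (a) that the weight localizes the integral to the box $\prod_i[-\beta_i,\beta_i]$ and (b) that the box integral of $\|z\|^{t-d}$ is $\approx\beta_1\cdots\beta_k\beta_{k+1}^{t-k}$ --- and then you prove neither. Step (a) is genuinely delicate under the bare hypothesis $N>t$: the only uniform decay available off the box is $(1+\alpha_d\|z\|)^{-N}$, and the resulting crude bound for the whole integral is $\approx\alpha_d^{-t}$, which is in general strictly larger than $\phi^t(T)^{-1}$; so one must actually exploit the anisotropy. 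The paper does this by replacing $(1+\|(\alpha_iz_i)_i\|)^{-N}$ with $(1+\sum_i|\alpha_iz_i|^{N})^{-1}$ and $\|z\|^{t-d}$ with $(\sum_j|z_j|^{d-t})^{-1}$, and then integrating out one variable at a time via the elementary estimate $\int_{\R}(A+|y|^{s})^{-1}dy\approx A^{-(s-1)/s}$ (Lemma \ref{lem:ReduceIntegral}), first eliminating $z_{k+2},\ldots,z_d$ and afterwards $z_1,\ldots,z_k$, with $N>t$ entering only in the final one-dimensional integral. None of this bookkeeping appears in your write-up, and the displayed formulas containing literal ``$?$'' placeholders and abandoned guesses confirm that the computation was not carried out.

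The lower bound is also unaddressed: it does not follow from the heuristic ``the integral is comparable to the box integral'' unless that comparability is established in both directions. It is in fact the easy half --- the paper simply restricts to the explicit region where $|x_i|\le\alpha_i^{-1}$ for $i\le k$, $\alpha_{k+1}^{-1}\le|x_{k+1}|\le 2\alpha_{k+1}^{-1}$ and $|x_j|\le\alpha_{k+1}^{-1}$ for $j\ge k+2$, on which both factors of the integrand are bounded below pointwise and whose volume is $\approx(\alpha_1\cdots\alpha_k\alpha_{k+1}^{d-k})^{-1}$ --- but some such argument must be given. In summary: correct plan, essentially the paper's route, but the two-sided estimate of the anisotropic integral, which is the entire content of the proposition, is asserted rather than proved.
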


The proof of the above proposition is based a simple lemma.
\begin{lem}\label{lem:ReduceIntegral}
	Let $ d\in \Bbb N $ and $ s > 1 $. Then for $ (x_{1}, \ldots, x_{d}) \in \euclid[d]\backslash \{0\} $,
	\begin{equation*}
		\int_{\Bbb R} \frac{1}{\left(\sum_{i=1}^{d}\abs{x_{i}}^{s}\right) + \abs{y}^{s}} \, dy \approx_{d,s} \frac{1}{\sum_{i=1}^{d}\abs{x_{i}}^{s-1}}.
	\end{equation*}
\end{lem}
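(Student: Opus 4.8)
The plan is to prove the asymptotic $\int_{\euclid[d]} (1+\|Tx\|)^{-N}\|y\|^{s}\,dy \approx \frac{1}{\sum_i |x_i|^{s-1}}$ type estimate for Lemma~\ref{lem:ReduceIntegral} first, and then feed it into a change-of-variables and coordinate-diagonalization argument for Proposition~\ref{lem:t-d*}, mirroring the structure of the proof of Proposition~\ref{lem:t}.

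\emph{Proof of Lemma~\ref{lem:ReduceIntegral}.} Fix $(x_1,\dots,x_d)\neq 0$ and set $R = \bigl(\sum_{i=1}^d |x_i|^s\bigr)^{1/s} > 0$. Substituting $y = Ru$ gives
\begin{equation*}
\int_{\Bbb R} \frac{dy}{\sum_{i=1}^d |x_i|^s + |y|^s} = \frac{1}{R^{s-1}}\int_{\Bbb R} \frac{du}{1 + |u|^s}.
\end{equation*}
Since $s > 1$, the integral $\int_{\Bbb R}(1+|u|^s)^{-1}\,du$ is a finite positive constant depending only on $s$, so the left-hand side is $\approx_{d,s} R^{-(s-1)} = \bigl(\sum_i |x_i|^s\bigr)^{-(s-1)/s}$. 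It remains to observe that $\bigl(\sum_i |x_i|^s\bigr)^{(s-1)/s} \approx_{d,s} \sum_i |x_i|^{s-1}$: both quantities are homogeneous of degree $s-1$ in $(|x_1|,\dots,|x_d|)$, continuous, and strictly positive on the unit sphere, hence comparable by compactness. This gives the claim.

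\emph{Proof of Proposition~\ref{lem:t-d*}.} Substituting $y = Tx$ as in \eqref{eq:t-change} reduces the integral to $\frac{1}{|\det(T)|}\int_{\euclid[d]}(1+\|y\|)^{-N}\|T^{-1}y\|^{t-d}\,dy$. Writing $\beta_i = 1/\alpha_i(T)$ and choosing coordinates along the eigenvectors of $(T^{-1})^*(T^{-1})$, this becomes $\frac{1}{|\det(T)|}\int_{\euclid[d]}(1+\|y\|)^{-N}\bigl(\beta_1^2 y_1^2 + \cdots + \beta_d^2 y_d^2\bigr)^{(t-d)/2}\,dy$. Since $t < d$ the exponent is negative; by the comparability of $\ell^2$ and $\ell^s$ norms (or rather of the corresponding radial weights), for any fixed $s>1$ one has $\bigl(\sum_i \beta_i^2 y_i^2\bigr)^{(t-d)/2} \approx_{d,t} \bigl(\sum_i \beta_i^s |y_i|^s\bigr)^{(t-d)/s}$; the convenient choice here is to isolate one variable and integrate it out. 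After relabelling so that $\beta_d$ is handled last, split $\|y\| \approx \sum_i |y_i|$ and integrate over $y_d$ using Lemma~\ref{lem:ReduceIntegral} with the substitution $y_d \mapsto (\sum_{i<d}\beta_i^{s}|y_i|^s)^{1/s}\beta_d^{-1} u$; iterating this one coordinate at a time peels off factors and, after $d$ steps, produces $\prod_{i=1}^d \beta_i^{\,e_i}$ up to constants, where the exponents $e_i$ sum to $d-t$ and are arranged so that the surviving product equals $\alpha_1(T)^{-(t-\lfloor t\rfloor)}\alpha_2(T)^{-1}\cdots$; more precisely the outcome is $\beta_1 \cdots \beta_{d-\lceil t\rceil}\,\beta_{d-\lceil t\rceil+1}^{\,\lceil t\rceil - t}$ after accounting for the ordering $\beta_1 \le \cdots \le \beta_d$ (note $\beta_i$ increases as $\alpha_i$ decreases). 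Recognizing that $\frac{1}{|\det(T)|}\prod(\text{these }\beta\text{'s}) = \prod_{j=1}^{\lfloor t\rfloor}\alpha_j(T)\cdot \alpha_{\lfloor t\rfloor+1}(T)^{t-\lfloor t\rfloor}/1 = \phi^t(T)$ is exactly the singular value function (using $t\notin\Bbb Z$, so $\lceil t\rceil = \lfloor t\rfloor+1$), we obtain the asserted asymptotic.

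The main obstacle is the bookkeeping in the iterated integration: one must integrate out the coordinates \emph{in the right order} (smallest $\alpha_i$, i.e.\ largest $\beta_i$, first — or the reverse, depending on convention) so that at each stage the hypothesis $s > 1$ of Lemma~\ref{lem:ReduceIntegral} is met with the correct accumulated exponent, and so that the leftover weight exponent stays in a range where the remaining integral converges (this is where $N > t$ and $t < d$ enter). Care is also needed at the final step where the accumulated exponent is fractional ($t - \lfloor t \rfloor$), which forces one to split off the last substantive variable with the \emph{non-integer} power $t-\lfloor t\rfloor$ rather than $1$; handling this cleanly may require a slightly more general version of Lemma~\ref{lem:ReduceIntegral} with weight $|y|^{a}$ for $0 < a < 1$ in the numerator, proved by the identical scaling argument. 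Everything else is routine homogeneity and compactness comparison.
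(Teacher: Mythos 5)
Your proof of Lemma~\ref{lem:ReduceIntegral} is correct and takes essentially the same route as the paper: both reduce the one-dimensional integral to $\bigl(\sum_{i}\abs{x_i}^s\bigr)^{-(s-1)/s}$ (you via the exact scaling substitution $y=Ru$ with $R=\bigl(\sum_i\abs{x_i}^s\bigr)^{1/s}$, the paper by splitting the integral at $A^{1/s}$), and then both conclude by the comparability of the homogeneous expressions $\bigl(\sum_i\abs{x_i}^s\bigr)^{(s-1)/s}$ and $\sum_i\abs{x_i}^{s-1}$, for which your compactness argument is a valid justification. The appended sketch of Proposition~\ref{lem:t-d*} is outside the statement under review, so I do not assess it here.
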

\begin{proof}
	Set $ A = \sum_{i=1}^{d}\abs{x_{i}}^{s} $. Then
	\begin{align*}
			\int_{\Bbb R} \frac{1}{\left(\sum_{i=1}^{d}\abs{x_{i}}^{s} \right)+ \abs{y}^{s}} \, dy & = 2 \int_{0}^{\infty} \frac{1}{A + y^{s}} \, dy  \approx \int_{0}^{A^{1/s}} \frac{1}{A} \, dy + 2 \int_{A^{1/s}}^{\infty} \frac{1}{y^{s}} \, dy \\
			& \approx \frac{1}{A^{(s-1)/s}} = \frac{1}{\left(\sum_{i=1}^{d}\abs{x_{i}}^{s}\right)^{(s-1)/s}} \approx_{d, s} \frac{1}{\sum_{i=1}^{d} \abs{x_{i}}^{s-1}},
	\end{align*}
as desired.
\end{proof}

\begin{proof}[Proof of Proposition \ref{lem:t-d*}]
	Suppose $ k < t < k+1 $ for some $ k \in \{ 0, \ldots, d-1\} $. Let $ \alpha_{1}\geq  \cdots\geq  \alpha_{d} $ be the singular values of $ T $. Choosing coordinate axes in the directions of the eigenvectors of $ T^{\ast}T$ corresponding to $\alpha_1^2,\ldots, \alpha_d^2$, we obtain
		\begin{equation}\label{eq:diag}
\begin{split}
		\int_{\euclid[d]} (1 + \|{Tx}\|)^{-N} \|x\|^{t-d} \, dx  &
=\int\cdots \int_{\euclid[d]} \frac{dx_{1}\cdots dx_{d}}{\left(1 + \sqrt{\sum_{i=1}^{d}\abs{\alpha_{i} x_{i}}^{2}}\right)^{N}\left(\sum_{j=1}^{d}\abs{x_{j}}^{2}\right)^{(d-t)/2}}\\
 &\approx_{N, d, t} \int\cdots \int_{\euclid[d]} \frac{dx_{1}\cdots dx_{d}}{\left(1 + \sum_{i=1}^{d}\abs{\alpha_{i} x_{i}}^{N}\right)\left(\sum_{j=1}^{d}\abs{x_{j}}^{d-t}\right)}.
\end{split}
	\end{equation}
	  Since $ d - t > d - (k+1) $, applying Lemma \ref{lem:ReduceIntegral} repeatedly yields
	 \begin{equation}\label{eq:AppReduce}
	 	\int\cdots\int_{\euclid[d - (k+1)]} \frac{1}{\sum_{i=1}^{d}\abs{x_{i}}^{d-t}} \, dx_{k+2}\cdots dx_{d} \approx_{d,t} \frac{1}{\sum_{i=1}^{k+1}\abs{x_{i}}^{k+1 - t}}.
	 \end{equation}
	  We make a convention that $ \alpha_{1}\cdots \alpha_{k} = 1 $ if $ k = 0 $. Then
	 \begin{equation}\label{eq:1stReduce}
	 	\begin{aligned}
	 	 \int_{\euclid[d]} & (1 + \|{Tx}\|)^{-N} \|{x}\|^{t-d} \, dx\\
	 	& \lesssim_{N,d,t}\int\cdots \int_{\euclid[d]} \frac{dx_{1}\cdots dx_{d}}{\left(1 + \sum_{i=1}^{k+1}\abs{\alpha_{i} x_{i}}^{N}\right)\left(\sum_{j=1}^{d}\abs{x_{j}}^{d-t}\right)}  & (\text{by } \eqref{eq:diag})  \\
 	 	&  \approx_{N,d,t} \int\cdots \int_{\euclid[k+1]} \frac{dx_{1}\cdots dx_{k+1}}{\left(1 + \sum_{i=1}^{k+1}\abs{\alpha_{i} x_{i}}^{N}\right)\left(\sum_{j=1}^{k+1}\abs{x_{j}}^{k+1-t}\right)} & (\text{by } \eqref{eq:AppReduce}) \\
 &  \leq \int\cdots \int_{\euclid[k+1]} \frac{dx_{1}\cdots dx_{k+1}}{\left(1 + \sum_{i=1}^{k+1}\abs{\alpha_{i} x_{i}}^{N}\right) |x_{k+1}|^{k+1-t}}  \\
	 	& = \frac{1}{\alpha_{1}\cdots \alpha_{k} \alpha_{k+1}^{t-k}} \int\cdots \int_{\euclid[k+1]} \frac{dy_{1}\cdots dy_{k+1}}{\left(1 + \sum_{i=1}^{k+1}\abs{y_{i}}^{N} \right) \abs{y_{k+1}}^{k+1 - t}}\\
	 	& = \frac{1}{\phi^{t}(T)} \int\cdots \int_{\euclid[k+1]} \frac{dy_{1}\cdots dy_{k+1}}{\left(1 + \sum_{i=1}^{k+1}\abs{y_{i}}^{N} \right) \abs{y_{k+1}}^{k+1 - t}}
	 \end{aligned}
	 \end{equation}
 	where in the second last equality we took a change of variables via $y_i=\alpha_ix$ for $1\leq i\leq k+1$.
 	 Since $ N > t > k $, applying Lemma \ref{lem:ReduceIntegral} repeatedly to the variables $ y_{1}, \ldots, y_{k} $ yields
 	 \begin{equation}\label{eq:t-d-UB}
 	 	\begin{aligned}
 	 	\int\cdots \int_{\euclid[k+1]} \frac{dy_{1}\cdots dy_{k+1}}{\left(1 + \sum_{i=1}^{k+1}\abs{y_{i}}^{N} \right) \abs{y_{k+1}}^{k+1 - t}}  & \lesssim_{N,d,t} \int_{\Bbb R}   \frac{1}{\left(1 + \abs{ y_{k+1}}^{N - k}\right)\abs{y_{k+1}}^{k+1 - t}} \, dy_{k+1} \\
% 	 	& = \frac{1}{\alpha_{1}\cdots \alpha_{k+1}^{t-k}} \int_{\bbR}   \frac{1}{1 + \abs{y}^{N - k} } \frac{1}{\abs{y}^{k+1 - t}} \, dy \\
 	 	& \lesssim_{N, d, t} 1
 	 \end{aligned}
 	 \end{equation}
  where in the last inequality we used $ N - k + (k+1) - t = N -t + 1 > 1 $ and $ 0 < k+1-t < 1 $. Combining
\eqref{eq:1stReduce} with \eqref{eq:t-d-UB} yields the upper bound
$$
\int_{\euclid[d]} (1 + \|{Tx}\|)^{-N} \|x\|^{t-d} \, dx \lesssim_{N,d,t}\frac{1}{\phi^{t}(T)}.
$$

Next we prove the lower bound. Let $\Omega$ denote the set of $(x_{1}, \ldots, x_{d}) \in \euclid[d]$ such that
\begin{align*}
&\abs{x_{i}} \leq \frac{1}{\alpha_{i}}\quad  \mbox{  for }1\leq i\leq k,\\
& \frac{1}{\alpha_{k+1}} \leq   \abs{x_{k+1}} \leq  \frac{2}{\alpha_{k+1}} \quad \mbox{  and }\\
& \abs{x_{j}} \leq   \frac{1}{\alpha_{k+1}}\quad  \mbox{  for } k+2\leq j\leq d.
\end{align*}
Then for each $ (x_{1}, \ldots, x_{d}) \in \Omega $,  we have \begin{equation*}
 	\sum_{i=1}^{d} \abs{x_{i}}^{d-t}  \leq 2^{d-t}d \alpha_{k+1}^{t-d} \lesssim_{d, t} \alpha_{k+1}^{t-d}
 \end{equation*}
	and $ \abs{\alpha_{i}x_{i}} \leq 2 $ for $1\leq i\leq d $ which implies
 \begin{equation*}
 	 \frac{1}{1 + \sum_{i=1}^{d}\abs{\alpha_{i} x_{i}}^{N}}  \geq \frac{1}{1 + 2^{N}d} \succsim_{N,d} 1.
 \end{equation*}
	Hence by \eqref{eq:diag},
  \begin{equation*}\label{eq:t-d-LB}
  	\begin{aligned}
  	\int_{\euclid[d]} (1 + \|Tx\|)^{-N} \|x\|^{t-d} \, dx  & \succsim_{N, d, t} \int\cdots \int_{\Omega} \frac{dx_{1}\cdots dx_{d}}{\left(1 + \sum_{i=1}^{d}\abs{\alpha_{i} x_{i}}^{N}\right)\left( \sum_{j=1}^{d}\abs{x_{j}}^{d-t}\right)} \, \\
  	& \succsim_{N, d, t}  \int\cdots \int_{\Omega} \alpha_{k+1}^{d-t} \, dx \\
  	& = \mathcal L^{d}(\Omega)  \alpha_{k+1}^{d-t} \\
  	& \succsim_{d} \frac{1}{\alpha_{1}\cdots\alpha_{k} \alpha_{k+1}^{d-k} \alpha_{k+1}^{t - d}} = \frac{1}{\phi^{t}(T)}.
  \end{aligned}
  \end{equation*}
 This finishes the proof of the lower bound.
\end{proof}

\section{The proofs of Theorem \ref{thm:general} and Corollary \ref{cor-1.2}}
\label{S4}

We first introduce some notation.  For $n\in \N$ write $\Sigma_n=\{1,\ldots, m\}^n$. Set $\Sigma_0=\{\varnothing\}$ where $\varnothing$ stands for the empty word. Write $\Sigma^*=\bigcup_{n=0}^\infty \Sigma_n$.  Set $|I|=n$ for every $I\in \Sigma_n$. For  each $t\geq 0$, we define $g_t:\Sigma^*\to (0,\infty)$ by
\begin{equation}
\label{e-gt*}
g_t(I)=\alpha_d(T_I)^t|\det(T_I)|,
\end{equation}
where we take the convention that $g_t(\varnothing)=1$.
\begin{lem}
Let $t\geq 0$. Then $g_t$ is super-multiplicative on $\Sigma^*$ in the sense that
\begin{equation}
\label{e-e0}
g_t(IJ)\geq g_t(I)g_t(J)\quad \mbox{  for all }\, I, J\in \Sigma^*.
\end{equation}
Consequently,
\begin{equation}\label{eq:limSupAdd}
	\lim_{n\to\infty} \left (\sum_{I\in \Sigma_{n}}g_{t}(I) \right )^{1/n} = \sup_{n\in\Bbb N} \left (\sum_{I\in \Sigma_{n}}g_{t}(I) \right )^{1/n}.
\end{equation}
\end{lem}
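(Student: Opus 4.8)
The plan is to prove the two assertions in order: first the super-multiplicativity inequality \eqref{e-e0}, then deduce the limit-equals-supremum statement \eqref{eq:limSupAdd} from it via Fekete's subadditive lemma applied to the sequence $-\log\left(\sum_{I\in\Sigma_n}g_t(I)\right)$.

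For the super-multiplicativity, fix $I,J\in\Sigma^*$ and recall $T_{IJ}=T_IT_J$. The determinant part is exact: $|\det(T_{IJ})|=|\det(T_I)|\,|\det(T_J)|$. For the singular-value part I need $\alpha_d(T_IT_J)\geq\alpha_d(T_I)\,\alpha_d(T_J)$. This is the standard submultiplicativity of the smallest singular value; the cleanest way to see it is to use $\alpha_d(A)=\min_{\|v\|=1}\|Av\|=\|A^{-1}\|^{-1}$ for $A\in{\rm GL}(d,\R)$ (all $T_i$ being invertible), so $\|(T_IT_J)^{-1}\|=\|T_J^{-1}T_I^{-1}\|\leq\|T_J^{-1}\|\,\|T_I^{-1}\|$, whence $\alpha_d(T_IT_J)=\|(T_IT_J)^{-1}\|^{-1}\geq\|T_I^{-1}\|^{-1}\|T_J^{-1}\|^{-1}=\alpha_d(T_I)\alpha_d(T_J)$. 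Combining the two gives $g_t(IJ)=\alpha_d(T_IT_J)^t|\det(T_IT_J)|\geq\alpha_d(T_I)^t\alpha_d(T_J)^t|\det(T_I)|\,|\det(T_J)|=g_t(I)g_t(J)$. The boundary cases where $I$ or $J$ equals $\varnothing$ hold trivially since $g_t(\varnothing)=1$.

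For \eqref{eq:limSupAdd}, set $a_n=\sum_{I\in\Sigma_n}g_t(I)$ for $n\geq1$. Every word in $\Sigma_{n+k}$ factors uniquely as $IJ$ with $I\in\Sigma_n$, $J\in\Sigma_k$, so by \eqref{e-e0},
\[
a_{n+k}=\sum_{I\in\Sigma_n}\sum_{J\in\Sigma_k}g_t(IJ)\geq\Big(\sum_{I\in\Sigma_n}g_t(I)\Big)\Big(\sum_{J\in\Sigma_k}g_t(J)\Big)=a_na_k,
\]
i.e. $(a_n)$ is supermultiplicative. Since each $a_n$ is a finite sum of strictly positive terms, $0<a_n<\infty$, so $b_n:=-\log a_n$ is a well-defined real sequence that is subadditive: $b_{n+k}\leq b_n+b_k$. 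Fekete's lemma gives $\lim_n b_n/n=\inf_n b_n/n\in[-\infty,\infty)$, and exponentiating (using continuity of $x\mapsto e^{-x}$, with the convention $e^{\infty}=\infty$ if the infimum is $-\infty$) yields $\lim_n a_n^{1/n}=\sup_n a_n^{1/n}$, which is exactly \eqref{eq:limSupAdd}.

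No step here is a genuine obstacle; the only point requiring a moment's care is the submultiplicativity $\alpha_d(T_IT_J)\geq\alpha_d(T_I)\alpha_d(T_J)$, which is why the proof should invoke the identity $\alpha_d(A)=\|A^{-1}\|^{-1}$ for invertible $A$ rather than manipulating singular values directly. I would therefore write the proof of \eqref{e-e0} in full (two lines) and then simply cite Fekete's lemma for the passage to \eqref{eq:limSupAdd}, noting the positivity of $a_n$ that legitimizes taking logarithms.
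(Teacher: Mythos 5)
Your proposal is correct and follows essentially the same route as the paper: the key inequality $\alpha_d(T_{IJ})=\|T_{IJ}^{-1}\|^{-1}\geq\|T_I^{-1}\|^{-1}\|T_J^{-1}\|^{-1}=\alpha_d(T_I)\alpha_d(T_J)$ combined with multiplicativity of the determinant, followed by Fekete's lemma applied to the supermultiplicative sequence $a_n=\sum_{I\in\Sigma_n}g_t(I)$. The only difference is that you spell out the Fekete step (via $-\log a_n$) which the paper leaves implicit; this is a harmless elaboration.
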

\begin{proof}
Notice that for $ I, J \in \Sigma^{\ast} $,
$$
\alpha_d(T_{IJ})= \norm{T_{IJ}^{-1}}^{-1} \geq \norm{T_{I}^{-1}}^{-1}\norm{T_{J}^{-1}}^{-1}=\alpha_d(T_{I})\alpha_d(T_{J}).
$$
It follows that
\begin{equation*}
	g_{t}(IJ) = \alpha_{d}(T_{IJ})^t \abs{\det (T_{IJ})} \geq \alpha_d(T_{I})^t\alpha_d(T_{J})^t \abs{\det (T_{I})}\abs{\det (T_{J})} =g_t(I)g_t(J).
\end{equation*}
Hence \eqref{e-e0} holds.
Set $a_n:= \sum_{I\in \Sigma_{n}}g_{t}(I)$ for $n\in \N$.  By \eqref{e-e0}, $a_{n+m}\geq a_na_m$ for all $n,m\in \N$, from which \eqref{eq:limSupAdd} follows.
\end{proof}

The next lemma allows us to construct a certain regular measure on $\Sigma$ under the assumptions of Theorem \ref{thm:general}.
\begin{lem}\label{lem:ExistMeasure}
	Suppose  $t(T_{1}, \ldots, T_{m}) > d$. Then for every $t$ with  $ d<t<t(T_1,\ldots, T_m)$, there exist a Borel probability measure  $\mu$ on $\Sigma$,  $r\in (0,1)$ and $C>0$ such that
	\begin{equation}
	\label{e-e2}
		\mu([I]) \leq  C g_{t}(I) r^{\abs{I}} \quad \mbox{ for all }\, I \in \Sigma^{\ast},
	\end{equation}
where $[I]:=\{x=(x_n)_{n=1}^\infty\in \Sigma:\; x_1\cdots x_k=I\}$ for $I\in \Sigma_k$.
	\end{lem}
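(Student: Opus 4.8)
The plan is to fix $t$ with $d < t < t(T_1,\ldots,T_m)$ and first extract, from the definition \eqref{e-tT} of $t(T_1,\ldots,T_m)$, the crucial spectral information: since $t < t(T_1,\ldots,T_m)$, the infimum in \eqref{e-tT} is not achieved at $t$, so $\sup_{n\geq 1}\sum_{I\in\Sigma_n} g_t(I) > 1$, hence $\sum_{I\in\Sigma_n} g_t(I) > 1$ for some $n$. By the super-multiplicativity established in the preceding lemma and the limit formula \eqref{eq:limSupAdd}, this gives
\[
\lambda := \lim_{n\to\infty}\Bigl(\sum_{I\in\Sigma_n} g_t(I)\Bigr)^{1/n} = \sup_{n\in\N}\Bigl(\sum_{I\in\Sigma_n} g_t(I)\Bigr)^{1/n} > 1.
\]
Set $r = 1/\lambda \in (0,1)$. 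Then for every $n$, $\sum_{I\in\Sigma_n} g_t(I)\, r^n \leq \sum_{I\in\Sigma_n} g_t(I)\, \lambda^{-n} \leq 1$ by the $\sup$-characterisation (this is where super-multiplicativity does its work: it makes $r^n$ a genuine upper bound for $(\sum g_t(I))^{-1}$ uniformly in $n$, not just asymptotically). Define a set function on cylinders by $\nu([I]) := g_t(I)\, r^{|I|}$ for $I\in\Sigma^*$.

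The second step is to turn $\nu$ into a bona fide Borel probability measure $\mu$ dominated by a constant times $\nu$. The function $g_t$ is super-multiplicative, so $\nu$ is \emph{super}-additive on cylinders: $\sum_{i=1}^m \nu([Ii]) = r^{|I|+1}\sum_{i=1}^m g_t(Ii) \geq r^{|I|+1} g_t(I) \sum_{i=1}^m g_t(i)$, which need not equal $\nu([I])$, so $\nu$ itself is not a measure and one cannot simply apply Kolmogorov extension. Instead I will use the standard mass-distribution / max-flow argument (as in Falconer's work on subadditive pressure): because $\sum_{I\in\Sigma_n}\nu([I])\leq 1$ for all $n$, one can run a bounded-flow construction on the tree $\Sigma^*$ — equivalently invoke the combinatorial lemma that if a nonnegative function $\nu$ on the tree satisfies $\sum_{|I|=n}\nu([I])\leq 1$ for every level $n$, then there is a probability measure $\mu$ on $\Sigma$ with $\mu([I])\leq \nu([I])$ for all $I$ (one distributes the unit mass level by level, at each vertex pushing down no more mass than $\nu$ allows, and the level-sum bound guarantees the construction never runs out of room; a compactness/Carathéodory argument upgrades the resulting premeasure on the semiring of cylinders to a Borel measure). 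A clean way to present this is via Kolmogorov's extension theorem applied to a suitably \emph{normalised} system, or by citing the flow lemma; either way one obtains $\mu\in\mathcal P(\Sigma)$ with $\mu([I])\leq \nu([I]) = g_t(I)\, r^{|I|}$, which is \eqref{e-e2} with $C=1$.

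I would carry the proof out in that order: (1) derive $\lambda>1$ and set $r=1/\lambda$; (2) verify the uniform level bound $\sum_{I\in\Sigma_n} g_t(I)\,r^n\leq 1$; (3) construct $\mu$ by the tree-flow / mass-distribution argument and conclude. The main obstacle is step (3): one must be careful that $\nu$ is only super-additive, so the measure $\mu$ cannot be obtained by naive disintegration and one genuinely needs the inequality $\sum_{|I|=n}\nu([I])\leq 1$ together with a flow/limiting argument to produce a consistent family of finite-level distributions whose projective limit exists. Everything else — the super-multiplicativity, the $\sup=\lim$ identity — is already in hand from the lemma preceding this one, so the spectral input is essentially free; the only real content is the passage from a uniformly bounded super-additive cylinder function to a dominated probability measure.
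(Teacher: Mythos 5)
Your steps (1) and (2) are sound and coincide with the paper's point of departure: from $t<t(T_1,\ldots,T_m)$ one extracts a level $N$ with $a_N:=\sum_{I\in\Sigma_N}g_t(I)>1$, and super-multiplicativity together with \eqref{eq:limSupAdd} gives $\lambda:=\sup_n a_n^{1/n}>1$. The gap is in step (3), and it is genuine. First, the combinatorial lemma you invoke is false as stated: for a \emph{probability} measure $\mu$ with $\mu([I])\le\nu([I])$ to exist, every cut $\Pi$ of the tree must satisfy $\sum_{I\in\Pi}\nu([I])\ge 1$ (this is the min-cut side of max-flow), so the relevant hypothesis is a \emph{lower} bound on cut sums; the upper bound $\sum_{|I|=n}\nu([I])\le 1$ that you verify points in the wrong direction and guarantees nothing. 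The natural way to get the cut lower bound would be superadditivity of $\nu$ along the tree, but your own computation gives only $\sum_i\nu([Ii])\ge\nu([I])\cdot r\sum_i g_t(i)=\nu([I])\cdot a_1/\lambda$, and $a_1/\lambda$ may be strictly less than $1$. Second, and more decisively, the target you set yourself, namely $\mu([I])\le C\,g_t(I)\lambda^{-|I|}$ with $r=1/\lambda$ exactly, can be unattainable for \emph{any} constant $C$: summing over $\Sigma_n$ forces $a_n\ge C^{-1}\lambda^n$ for all $n$, whereas super-multiplicativity yields only the upper bound $a_n\le\lambda^n$, and $a_n/\lambda^n\to 0$ actually occurs. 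For instance, if all $T_i$ equal a single non-diagonalisable contraction $T$ with spectral radius of $T^{-1}$ equal to $\rho^{-1}$, then $\alpha_d(T^n)=\|T^{-n}\|^{-1}\approx\rho^n/n^{d-1}$, so $a_n/\lambda^n\approx n^{-t(d-1)}\to 0$ while the hypothesis $t(T,\ldots,T)>d$ can still hold for $m$ large. So no flow or mass-distribution argument, however carefully run, can produce the measure you want at that value of $r$.

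The paper's proof sidesteps all of this by working at a single level: fix $N$ with $\lambda_N:=\sum_{I\in\Sigma_N}g_t(I)>1$, normalise $p_I=g_t(I)/\lambda_N$ to a probability vector on $\Sigma_N$, let $\mu$ be the associated Bernoulli product measure on $(\Sigma_N)^{\N}$, and set $r=\lambda_N^{-1/N}$. For $I=I_1\cdots I_k\in(\Sigma_N)^k$ super-multiplicativity gives $\mu([I])=\lambda_N^{-k}\prod_\ell g_t(I_\ell)\le g_t(I)\,r^{kN}$, which is precisely the lower-bound direction your choice of $r$ lacks (here $a_{kN}\ge a_N^{k}=r^{-kN}$ is guaranteed), and words of intermediate length are handled by truncating to the last complete block and absorbing the factor $\max\{1/g_t(W)\colon |W|\le N\}$ into $C$. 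This yields \eqref{e-e2} with an explicit measure and no appeal to a flow lemma; any repair of your step (3) essentially reduces to this construction.
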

\begin{proof}
Let $ d<t<t(T_1,\ldots, T_m)$.  By the definition of $ t(T_1,\ldots, T_m) $, there exists $ N \in \Bbb N $ such that
	\begin{equation*}
		\lambda:=\sum_{I\in \Sigma_{N}}g_{t}(I) >1.
	\end{equation*}
  Define a probability vector $p=\{p_I\}_{I\in \Sigma_N}$  by
 $$
p_I = g_{t}(I)\lambda^{-1},\quad I\in \Sigma_{N}.
$$
Let $\mu$ be the Bernoulli product measure on $\Sigma=(\Sigma_N)^\N$ associated with $p$. That is,
	\begin{equation} \label{eq:prodMeasure}
		\mu([I_{1}\ldots I_{k}]) = \prod_{\ell=1}^{k}p_{I_{\ell}}
	\end{equation}
	for every $k\in \N$ and $ I_{1},\ldots I_{k} \in \Sigma_{N}$.
	
Set $r=\lambda^{-1/N}$. Then $0<r<1$. 	Next we show that \eqref{e-e2} holds for some $C>0$. To this end, let $ I \in \Sigma^{\ast} $.
Then $I$ can be written as  $ I = I_{1}\ldots I_{k} W $  with $I_1,\ldots, I_k\in \Sigma_N$ and  $ 1\leq \abs{W}\leq N $. It may happen that $k=0$ and in that case $I_{1}\ldots I_{k}$ should be viewed as the empty word $\varnothing$.  It follows from \eqref{e-e0} that
	\begin{equation}\label{eq:transBack}
		\frac{g_{t}(I_{1}\ldots I_{k}) }{g_{t}(I)}  = \frac{g_{t}(I_{1}\ldots I_{k}) }{g_{t}( I_{1}\ldots I_{k} W )} \leq \dfrac{1}{g_{t}(W)} \leq \gamma
	\end{equation}
where $\gamma: = \max\{ 1/g_{t}(J) \colon \abs{J} \leq N \} < \infty $. By \eqref{eq:prodMeasure}, \eqref{e-e0} and \eqref{eq:transBack},
	\begin{align*}
	\mu([I])  \leq \mu([I_{1} \ldots I_{k}])
	= \prod_{\ell=1}^{k} \dfrac{g_{t}(I_{\ell})}{\lambda}
 \leq g_{t}(I_{1}\ldots I_{k})\lambda^{-k}
 \leq \gamma  g_{t}(I) \lambda^{-k}.
 \end{align*}
Letting $ C = \gamma \lambda  $ and using $  k \geq \abs{I}/N - 1 $, we  see that $\gamma \lambda^{-k}\leq C (\lambda^{-1/N})^{|I|}=C r^{|I|}$, so
$\mu(I)\leq C g_{t}(I) r^{|I|}$.
\end{proof}

Now we are ready to prove Theorem \ref{thm:general}.

\begin{proof}[Proof of Theorem \ref{thm:general}]
	Fix $t$ so that  $d<t<t(T_1,\ldots, T_m)$.  By Lemma \ref{lem:ExistMeasure}, there exist a Borel probability measure $\mu$ on $\Sigma$, $r\in (0,1)$ and $C>0$  such that
	\begin{equation}
	\label{e-e2*}
		\mu([I]) \leq  C g_{t}(I) r^{\abs{I}} \quad \mbox{ for all }\, I \in \Sigma^{\ast} .
	\end{equation}
Notice that $\mu([I])\to 0$ as $|I|\to \infty$. So $\mu$ has no atoms.
	
	For brevity we write $\mu^\ba=\mu\circ (\pi^\ba)^{-1}$ for $\ba\in \R^{md}$.  Clearly, $\mu^\ba$ is supported on $K^\ba$ for each $\ba$.
For $\rho>0$, let $B(0,\rho)$ denote the closed ball in $\R^{md}$ of radius $\rho$ centred at the origin. 	We claim that for each $\rho>0$,
\begin{equation}\label{eq:generalGoal}
		\int_{B(0,\rho)} \int_{\euclid[d]} \abs{\widehat{\mu^{\ba}}(\xi)}^{2} \|{\xi}\|^{t} \, d\xi\, d\ba < \infty.
	\end{equation}
Clearly, \eqref{eq:generalGoal} implies that for $\mathcal L^{md}$-a.e.~$\ba\in B(0,\rho)$, 	$$\int_{\euclid[d]} \abs{\widehat{\mu^{\ba}}(\xi)}^{2} \|{\xi}\|^{t} \, d\xi<\infty;$$  so $\dim_S \mu^\ba\geq t+d>2d$ by Definition \ref{de-2.1}.
	By Lemma \ref{lem-Mat},  $K^\ba$ has non-empty interior for $\mathcal L^{md}$-a.e.~$\ba\in B(0,\rho)$.
		
In what follows we prove \eqref{eq:generalGoal}.  Fix $\rho>0$.
	Take $\psi \in C_{0}^{\infty}(\euclid[md]) $ such that  $0\leq \psi\leq 1$ and  $ \psi(x)= 1 $ for all $x\in B(0,\rho)$. Applying  Fubini's theorem,
	\begin{equation}
	\label{eq:prepare}
		\begin{aligned}
		 \int_{B(0,\rho)} & \int_{\euclid[d]} \abs{\widehat{\mu^{\ba}}(\xi)}^{2} \|{\xi}\|^{t} \, d\xi\, d\ba \\
		& \leq  \int_{\euclid[md]} \int_{\euclid[d]}   \psi(\ba) \abs{\widehat{\mu^{\ba}}(\xi)}^{2} \|{\xi}\|^{t}  \, d\ba \, d\xi \\
		& =    \int_{\euclid[md]} \int_{\euclid[d]}   \psi(\ba)\|{\xi}\|^{t}  \int_{\Sigma} \int_{\Sigma} e^{-i\innerprod{\xi}{\pi^{\ba}(x) - \pi^{\ba}(y)}} \,d\mu(x)d\mu(y)  \, d\ba \, d\xi \\
		&  = \int_{\Sigma} \int_{\Sigma}  \int_{\euclid[d]} \|{\xi}\|^{t}   \int_{\euclid[md]} \psi(\ba) e^{-i\innerprod{\xi}{\pi^{\ba}(x) - \pi^{\ba}(y)}} \, d\ba \, d\xi  \,d\mu(x)d\mu(y).
	\end{aligned}
	\end{equation}
	
Take  $N>t+d$.  By Proposition  \ref{coro:StatPhase}, there exists $\widetilde{C}>0$ such that
\begin{equation}\label{eq:AppStatPhase}
		\left|\int_{\euclid[md]} \psi(\ba) e^{-i\innerprod{\xi}{\pi^{\ba}(x) - \pi^{\ba}(y)}} \, d\ba\right| \leq \widetilde{C} (1+\|{T_{x\wedge y}^{\ast} \xi}\|)^{-N}  	\end{equation}
 for all $\xi\in \R^d$ and $x,y\in \Sigma$  with $x\neq y$.  Since $\mu$ has no atoms, $\mu\times \mu$ is fully supported on $\{(x,y)\in \Sigma\times \Sigma:\; x\neq y\}$. To see this, simply notice that
 $$
 \mu\times \mu\{(x,x):\; x\in \Sigma\}\leq \sum_{I\in \Sigma_n} \mu([I])^2\leq \sup_{I\in \Sigma_n} \mu([I])\to 0 \;\mbox { as }n\to \infty.
 $$
Hence, by  \eqref{eq:prepare} and \eqref{eq:AppStatPhase},
\begin{equation*}
	\label{eq:prepare*}
		\begin{aligned}
		 \int_{B(0,\rho)} & \int_{\euclid[d]} \abs{\widehat{\mu^{\ba}}(\xi)}^{2} \|{\xi}\|^{t} \, d\xi\, d\ba \\
		 &\leq \widetilde{C}\int_{\Sigma} \int_{\Sigma}  \int_{\euclid[d]} (1+\|{T_{x\wedge y}^{\ast} \xi}\|)^{-N} \|{\xi}\|^{t} \, d\xi  \,d\mu(x)d\mu(y) \\
		& \leq C'  \int_{\Sigma} \int_{\Sigma}  g_{t}(x\wedge y)^{-1}  \,d\mu(x)d\mu(y)  & (\text{by Proposition \ref{lem:t}}) \\
 		& \leq C' \sum_{n=0}^{\infty} \sum_{I\in\Sigma_{n}} g_{t}(I)^{-1} \mu([I])^2\\
 		& \leq C''  \sum_{n=0}^{\infty} \sum_{I\in\Sigma_{n}} r^{n} \mu([I]) & (\text{by } \eqref{e-e2}) \\
 		& = \frac{C''}{1-r} < \infty,
	\end{aligned}
	\end{equation*}
where $C', C''$ are two positive constants.  This proves \eqref{eq:generalGoal}.
\end{proof}

\begin{proof}[Proof of Corollary \ref{cor-1.2}]  Clearly the condition  $\sum_{i=1}^m \alpha_d(T_i)^d |\det(T_i)|>1$ implies that $t(T_1,\ldots, T_m)> d$.   Hence by Theorem \ref{thm:general}, $K^\ba$ has non-empty interior for $\mathcal L^{md}$-a.e.~$\ba\in \R^{md}$ if condition (i) holds.

Notice that whenever $T_i$ ($i=1,\ldots, m$) are scalar multiples of orthogonal matrices,
$$\sum_{i=1}^m \alpha_d(T_i)^d|\det(T_i)|=\sum_{i=1}^m |\det(T_i)|^2.$$
 Hence condition (ii) implies condition (i).
\end{proof}

\section{The proof of Theorem \ref{thm:abelian}}
\label{S5}

In this section we prove Theorem \ref{thm:abelian}. For $\ba=(a_1,\ldots, a_m)\in \R^{md}$, let $\pi^{\ba}$ be the coding map associated with the IFS
$\{T_ix+a_i\}_{i=1}^m$; see \eqref{e-pia}.  Let $\Sigma^*$ be defined as in the beginning part of Section \ref{S4}. Recall that for a Borel probability measure $\eta$ on $\R^d$, its  Hausdorff
dimension $\dim_{\rm H}\eta$  is the smallest Hausdorff dimension of a Borel set $F$  of positive $\eta$ measure.
Part (ii) of the following result is needed in our proof.

\begin{pro}{\cite[Proposition~4.4]{JordanEtAl2007}}
\label{lem:JPS-measure}
	Assume that $\|T_i\|<1/2$ for $1\leq i\leq m$ . Let $ \mu $ be a Borel probability measure on $ \Sigma $. Suppose that there exist $ s >0 $ and $ C > 0 $ such that
	\begin{equation*}
		\mu([I]) \leq C \phi^{s}(T_{I})  \quad\text{ for } I \in \Sigma^{\ast},
	\end{equation*}
where $\phi^s$ is the singular value function defined as in \eqref{e-singular}. 	
	Then the following properties hold:
\begin{itemize}
\item[(i)] If $0<s\leq d$, then $\dim_{\rm H}  \mu^\ba\geq s$ for $ \mathcal L^{md} $-a.e.\ $ \ba \in \euclid[md] $, where $\mu^\ba:=\mu\circ (\pi^\ba)^{-1}$.
\item[(ii)] If $s>d$, then $ \mu^\ba \ll \mathcal L^{d} $ for $ \mathcal L^{md} $-a.e.\ $ \ba \in \euclid[md] $.
\end{itemize}
\end{pro}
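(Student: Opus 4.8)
I would prove both parts by the same method used for Theorem~\ref{thm:general}, choosing the Riesz/Sobolev exponent according to the case. We will use two classical facts (see e.g.~\cite{Mattila2015,PeresSchlag2000}): for a compactly supported $\nu\in\mathcal M(\R^d)$ one has $\nu\ll\mathcal L^d$, with an $L^2$ density, whenever $\widehat\nu\in L^2(\R^d)$; and for $0<t<d$ the $t$-energy satisfies $\iint\|u-v\|^{-t}\,d\nu(u)\,d\nu(v)=c(d,t)\,\mathcal I_t(\nu)=c(d,t)\int_{\R^d}|\widehat\nu(\xi)|^2\|\xi\|^{t-d}\,d\xi$. So for part~(i) it suffices to prove that for every $t\in(0,s)\setminus\Z$ and every $\rho>0$,
\[
\int_{B(0,\rho)}\int_{\R^d}|\widehat{\mu^{\ba}}(\xi)|^2\|\xi\|^{t-d}\,d\xi\,d\ba<\infty ,
\]
while for part~(ii) it suffices to prove that for every $\rho>0$,
\[
\int_{B(0,\rho)}\int_{\R^d}|\widehat{\mu^{\ba}}(\xi)|^2\,d\xi\,d\ba<\infty .
\]
In case~(i), Fubini then gives $\mathcal I_t(\mu^{\ba})<\infty$ for $\mathcal L^{md}$-a.e.\ $\ba\in B(0,\rho)$, hence $\dim_{\rm H}\mu^{\ba}\ge t$ by the potential-theoretic criterion; letting $t\uparrow s$ along non-integers, letting $\rho\to\infty$ through $\N$, and intersecting the resulting conull sets yields $\dim_{\rm H}\mu^{\ba}\ge s$ for $\mathcal L^{md}$-a.e.\ $\ba$ (note $t<s\le d$ forces $t\in(0,d)\setminus\Z$, so the energy identity and Proposition~\ref{lem:t-d*} apply). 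In case~(ii), Fubini gives $\widehat{\mu^{\ba}}\in L^2(\R^d)$ for a.e.\ $\ba$, hence $\mu^{\ba}\ll\mathcal L^d$.

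First I would observe, exactly as in the proof of Theorem~\ref{thm:general}, that $\mu([I])\le C\phi^s(T_I)\le C(1/2)^{\min\{s,d\}|I|}\to 0$, so $\mu$ has no atoms and $\mu\times\mu$ is concentrated on $\{(x,y)\in\Sigma\times\Sigma:x\neq y\}$. Fix $\rho>0$ and pick $\psi\in C_0^\infty(\R^{md})$ with $0\le\psi\le1$ and $\psi\equiv1$ on $B(0,\rho)$. Writing $|\widehat{\mu^{\ba}}(\xi)|^2=\iint_{\Sigma\times\Sigma}e^{-i\innerprod{\xi}{\pi^{\ba}(x)-\pi^{\ba}(y)}}\,d\mu(x)\,d\mu(y)$ and using Fubini, each of the two integrals above is bounded by
\[
\iint_{\Sigma\times\Sigma}\int_{\R^d}\|\xi\|^{\gamma}\left|\int_{\R^{md}}\psi(\ba)\,e^{-i\innerprod{\xi}{\pi^{\ba}(x)-\pi^{\ba}(y)}}\,d\ba\right|\,d\xi\,d\mu(x)\,d\mu(y),
\]
with $\gamma=t-d$ in case~(i) and $\gamma=0$ in case~(ii). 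By Proposition~\ref{coro:StatPhase} the inner $\ba$-integral is at most $\widetilde C\,(1+\|T_{x\wedge y}^{\ast}\xi\|)^{-N}$; choose $N>t+d$ in case~(i) and $N>d$ in case~(ii). Applying Proposition~\ref{lem:t-d*} to $T=T_{x\wedge y}^{\ast}$ in case~(i), and Proposition~\ref{lem:t} with exponent $0$ to $T=T_{x\wedge y}^{\ast}$ in case~(ii), and using $\phi^t(T^{\ast})=\phi^t(T)$ and $\abs{\det T^{\ast}}=\abs{\det T}$, the $\xi$-integral is $\lesssim\phi^t(T_{x\wedge y})^{-1}$ in case~(i) and $\lesssim\abs{\det T_{x\wedge y}}^{-1}$ in case~(ii). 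Since $(\mu\times\mu)\{(x,y):x\wedge y=I\}\le\mu([I])^2$, it remains to bound
\[
\sum_{n=0}^{\infty}\sum_{I\in\Sigma_n}\frac{\mu([I])^2}{\phi^t(T_I)}\qquad\text{resp.}\qquad\sum_{n=0}^{\infty}\sum_{I\in\Sigma_n}\frac{\mu([I])^2}{\abs{\det T_I}}.
\]

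To close, I would use $\mu([I])\le C\phi^s(T_I)$ to replace one factor $\mu([I])$ and extract geometric decay from $\|T_i\|<1/2$. In case~(i), the map $u\mapsto\log\phi^u(A)$ is concave on $(0,d]$ with all slopes at most $\log\|A\|$, so $\phi^s(T_I)/\phi^t(T_I)\le\|T_I\|^{\,s-t}\le 2^{-(s-t)|I|}$; in case~(ii), $\phi^s(T_I)/\abs{\det T_I}=\abs{\det T_I}^{s/d-1}\le\|T_I\|^{\,s-d}\le 2^{-(s-d)|I|}$. In both cases the sum is at most $C\sum_{n\ge0}r^n\sum_{I\in\Sigma_n}\mu([I])=C\sum_{n\ge0}r^n<\infty$ with $r<1$, which establishes the two finiteness statements and hence the proposition.

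I do not expect a genuinely new obstacle beyond what is already handled in the proof of Theorem~\ref{thm:general}: Propositions~\ref{coro:StatPhase}, \ref{lem:t} and \ref{lem:t-d*} were set up precisely for this, and the only original analytic input (the stationary-phase-type bound of Proposition~\ref{coro:StatPhase}) is already available. The points needing care are essentially bookkeeping: keeping the Riesz exponent inside $(0,d)\setminus\Z$ so that Proposition~\ref{lem:t-d*} applies (handled by approximating $s$ from below through non-integers and taking a countable intersection), invoking the classical energy/Fourier identity and the $L^2$ criterion for absolute continuity, selecting $N$ correctly in the two cases, and verifying the elementary ratio estimates $\phi^s(T_I)/\phi^t(T_I)\le 2^{-(s-t)|I|}$ and $\abs{\det T_I}^{s/d-1}\le 2^{-(s-d)|I|}$ that supply the summability.
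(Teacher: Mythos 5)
Your proposal is correct and follows essentially the same route as the paper's sketched proof: Fubini, the stationary-phase bound of Proposition~\ref{coro:StatPhase}, then Proposition~\ref{lem:t-d*} (resp.\ Proposition~\ref{lem:t} with exponent $0$), and summation over cylinders. The only cosmetic differences are that you phrase the conclusion of part~(i) via the classical Riesz-energy identity rather than via Sobolev dimension and $\dim_{\rm H}\eta\geq\min\{\dim_S\eta,d\}$, and you extract the geometric factor $r^{|I|}$ at the end via the ratio estimate $\phi^{s}(T_I)/\phi^{t}(T_I)\leq\|T_I\|^{s-t}$ instead of building it into the hypothesis $\mu([I])\leq Cr^{|I|}\phi^{t}(T_I)$ at the outset, which amounts to the same computation.
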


We remark that part (i) of the above proposition was also implicitly proved in \cite{Falconer1988}.  In Section \ref{S6}, we will provide an alternative proof of Proposition \ref{lem:JPS-measure} by adapting the proof of Theorem \ref{thm:general}.

\begin{proof}[Proof of Theorem \ref{thm:abelian}]  The proof is conducted as follows. For each $\ba\in \R^{md}$, we  will construct two compact sets $E^{\ba}, F^{\ba}\subset \R^d$ and a vector $v^{\ba}\in \R^d$ such that
$$
K^{\ba}\supset E^{\ba}+F^{\ba}+v^{\ba}:=\{x+y+v^{\ba}:\; x\in E^\ba,\, y\in F^\ba\}.
$$
Then we will show that  both $E^{\ba}$ and $F^\ba$ have positive $d$-dimensional Lebesgue measure for $\mathcal L^{md}$-a.e.~$\ba\in \R^{md}$.
	Clearly by the Steinhaus theorem (see e.g.~\cite{Stromberg1972}),  $K^\ba$ has nonempty interior for $\mathcal L^{md}$-a.e.~$\ba\in \R^{md}$.

	Before giving our constructions of $E^\ba$, $F^\ba$ and $v^\ba$ for $\ba\in \R^{md}$, we first make some preparation.  Write
	$$\mathcal T_{n}:= \{ T_{I} \colon I \in \Sigma_{n} \},\quad n\in \N.$$	
	 Since $ T_{1}, \ldots, T_{m} $ commute, each element in  $\mathcal T_{n}$ is of the form $T_1^{p_1}\cdots T_m^{p_m}$, with $p_1,\ldots, p_m$ being nonnegative integers so that $p_1+\cdots+ p_m=n$.  It follows that
	\begin{equation}
	\label{e-o1}
		\# \mathcal T_{n} \leq (n+1)^{m} \quad \mbox{ for every }n\in \N,
	\end{equation}
 where $\#$ stands for cardinality. 	

 	Since $ \sum_{i=1}^{m} \abs{\det (T_{i})}^{2} > 1 $, by continuity we can choose  $ t > 2 $ such that $$\lambda:= \sum_{i=1}^{m} \abs{\det (T_{i})}^{t} > 1.$$
  Then for $ n \in \Bbb N $,
	\begin{equation}
	\label{e-o2}
	\begin{aligned}
	 	\lambda^{n} &=  \sum_{I\in \Sigma_{n}} \abs{\det (T_{I})}^{t} = \sum_{A \in \mathcal T_{n}} \sum_{I \in \Sigma_{n} \colon T_{I} = A }  \abs{\det (T_{I}) }^{t} \\
		&= \sum_{A \in \mathcal T_{n}} \# \{ I \in \Sigma_{n} \colon T_{I} = A \} \cdot \abs{\det (A) }^{t} .
		\end{aligned}
	\end{equation}
	Since $\lambda>1$, we can choose a large  positive integer $ N$  such that $\lambda^{N}>(N+1)^m$. Then by \eqref{e-o1}, $$ \lambda^{N}> \# \mathcal T_{N}.$$  Applying this to \eqref{e-o2} (in which we take $n=N$) yields that  there exists $ A \in \mathcal T_{N} $ such that
	\begin{equation*}\label{eq:sumDet>1}
		\# \{ I \in \Sigma_{N} \colon T_{I} = A \} \cdot \abs{\det (A) }^{t} > 1.
	\end{equation*}
	Setting $ \mathcal A = \{ I \in \Sigma_{N} \colon T_{I} = A \}  $, we obtain
\begin{equation}
\label{e-o3}
	  (\#  \mathcal A)\cdot  \abs{\det A}^{t} > 1.
	 \end{equation}
	Fix an element $ J \in  \mathcal A $.
	
	Let $\ba=(a_1,\ldots, a_m)\in \R^{md}$. For $ I = i_{1} \ldots i_{N} \in \Sigma_{N} $, define $ 	a_{I} = \sum_{k=0}^{N-1} T_{i_1\ldots i_k}a_{i_{k+1}} $. Then it is easily checked that
	$$f_I^\ba(x):=f^\ba_{i_1}\circ\cdots\circ f^{\ba}_{i_N}(x)=T_Ix+a_I,\quad I\in \Sigma_N.$$
Hence by the definition of $\mathcal A$,  $f_I^\ba(x)=Ax+a_I$ for each $I\in \mathcal A$.	It follows that $\{ Ax + a_{I} \}_{I\in  \mathcal A}$ is a sub-family of the IFS $\{f^\ba_I\}_{I\in \Sigma_N}$. 	Therefore,  letting  $ G^{\ba} $ be the attractor of $ \{ Ax + a_{I} \}_{I\in  \mathcal A} $, we have
	\begin{equation}
\label{e-o4}
		K^{\ba} \supset G^{\ba} =\left  \{ \sum_{k=0}^{\infty} A^{k} a_{I_{k+1}} \colon (I_{k})_{k=1}^{\infty} \in  \mathcal A^{\Bbb N} \right \}.
	\end{equation}

 Notice that for  each $  (I_{k})_{k=1}^{\infty} \in  \mathcal A^{\Bbb N} $,
	\begin{equation}
	\label{e-o3'}
	\begin{aligned}
		 \quad\; \sum_{k=0}^{\infty} A^{k} a_{I_{k+1}}
		 & = \sum_{k=0}^{\infty} \left(A^{2k}a_{I_{2k+1}} + A^{2k+1}a_{I_{2k+2}}\right) \\
		& = \left ( \sum_{k=0}^{\infty} \left(A^{2k}a_{I_{2k+1}}+ A^{2k+1}a_{J}\right) \right ) + \\
		& \; \quad  \left (\sum_{k=0}^{\infty} \left(A^{2k+1}a_{I_{2k+2}} +   A^{2k}a_{J}\right)\right )
		- \sum_{k=0}^{\infty} A^{k}a_{J}.
	\end{aligned}
	\end{equation}
(Recall that $J$ is a fixed element in $\mathcal A$.)
Define $ v^\ba= - \sum_{k=0}^{\infty} A^{k}a_{J} $ and
\begin{align*}
	E^{\ba} &= \left \{ \sum_{k=0}^{\infty} A^{k} a_{I_{k+1}} \colon I_{2n+1} \in  \mathcal A \text{ and } I_{2n+2} =J \text{ for all } n \geq 0 \right \},\\
	F^{\ba} & = \left \{ \sum_{k=0}^{\infty} A^{k} a_{I_{k+1}} \colon I_{2n+1} = J \text{ and } I_{2n+2} \in  \mathcal A   \text{ for all } n \geq 0 \right \}.
\end{align*}
By \eqref{e-o4} and \eqref{e-o3'},
\begin{equation*}
	K^{\ba} \supset G^\ba = E^\ba+ F^\ba + v^\ba.
\end{equation*}

Next we show that for $\mathcal L^{md}$-a.e.~$\ba\in \R^{md}$, $\mathcal L^{d}(E^\ba)>0$ and $\mathcal L^{d}(F^\ba)>0$.
Noticing that $ F^{\ba} = A E^{\ba}+a_J $ with $ A $ being invertible,  so we only need to show that $\mathcal L^d(E^\ba)>0$ for $\mathcal L^{md}$-a.e.~$\ba$.

Define $ \Lambda=\{IJ:\; I\in \mathcal A\}$.   Then $ \Lambda $ is a subset of $ \Sigma_{2N} $,  so $\Lambda^\N$ is a compact subset of $\Sigma$ since $\Sigma=(\Sigma_{2N})^\N$.  By the definition of $E^\ba$, we see that $E^\ba=\pi^\ba(\Lambda^\N)$. Let $ \mu $ be the Bernoulli product measure on $\Lambda^\N$ associated the uniform probability vector $(1/\#  \mathcal A,\ldots, 1/\#  \mathcal A)$. That is,
\begin{equation}
\label{e-o5}
\mu([\omega_1\cdots\omega_n])=\left(\frac{1}{\#  \mathcal A}\right)^n \quad\mbox{ for all } n\in \N \mbox{ and } \omega_1,\ldots,\omega_n\in \Lambda.
\end{equation}
Since $\Lambda^\N$ is a compact subset of $\Sigma$, $\mu$ can be viewed as a Borel probability measure on $\Sigma$. In particular, $\pi^\ba_*\mu=\mu\circ (\pi^\ba)^{-1}$ is supported on $E^\ba$ for each $\ba\in \R^{md}$.

Now we claim that there exists $C>0$ such that
\begin{equation}
\label{e-o7}
\mu([I])\leq C \phi^{t/2}(T_I)\quad \mbox{ for all }I\in \Sigma^*,
\end{equation}
where $\phi^s$ denotes the singular value function defined as in \eqref{e-singular}.  To prove the claim, let $I\in \Sigma^*$. Then there is a unique integer $k\geq 0$ such that $2kN\leq |I|< 2(k+1)N$. Write $I=I_1I_2$ with $|I_1|=2kN$. Clearly, $\mu([I])\leq \mu([I_1])$. If $I_1\not\in \Lambda^k$, then $\mu([I_1])=0$ since $\mu$ is supported on $\Lambda^\N$. Otherwise if $I_1\in \Lambda^k$,  by \eqref{e-o5} and \eqref{e-o3},
\begin{equation}
\label{e-o6}
\mu([I])\leq \mu([I_1])=\left(\frac{1}{\#  \mathcal A}\right)^k<|\det(A)|^{kt}=|\det(T_{I_1})|^{t/2},
\end{equation}
where in the last equality we have used the fact that $I_1\in \mathcal A^{2k}$ which implies $T_{I_1}=A^{2k}$. Since $$|\det(T_I)|=|\det(T_{I_1})\det(T_{I_2})|\geq  |\det(T_{I_1})| \left(\min_{1\leq i\leq m} |\det(T_i)|\right)^{2N},$$
it follows from \eqref{e-o6} that
$$
\mu([I])\leq |\det(T_{I_1})|^{t/2}\leq C |\det(T_I)|^{t/2}=C \phi^{dt/2}(T_I),
$$
where $C:=(\min_{1\leq i\leq m}|\det(T_i)|)^{-tN}$, and in the last equality we have used that $t>2$. This completes the proof of \eqref{e-o7}.

Finally by \eqref{e-o7} and Proposition \ref{lem:JPS-measure}(ii), $\mu^\ba\ll \mathcal L^d$ for $\mathcal L^{md}$-a.e.~$\ba\in \R^{md}$, where $\mu^\ba:=\mu\circ (\pi^\ba)^{-1}$.    Since $ \mu^{\ba}$ is supported  on $ E^\ba $ for each $\ba$, it follows that $\mathcal L^{d}(E^\ba)>0$ for $ \mathcal L^{md} $-a.e. $ \ba \in \euclid[md] $.
\end{proof}

\section{An alternative proof of Proposition  \ref{lem:JPS-measure}}
\label{S6}

We remark that Proposition  \ref{lem:JPS-measure} can be alternatively proved by estimating the Sobolev energies and Sobolev dimension of $\mu^\ba$. Below we give a sketched proof.

\begin{proof}[Sketched proof of Proposition \ref{lem:JPS-measure}]
We first prove (i). Take any  $t\in (0,s)\backslash \Z$. Then there exists $r\in (0,1)$ such that
\begin{equation}
\label{e-ttt}\mu([I]) \leq C r^{|I|} \phi^t(T_I)   \quad\text{ for } I \in \Sigma^{\ast}.
\end{equation}
Take $N>t$. For each $\rho>0$, following the   proof of Theorem \ref{thm:general} with minor changes,  we obtain
\begin{equation*}
\begin{aligned} \int_{B(0,\rho)}&  \int_{\euclid[d]} \abs{\widehat{\mu^{\ba}}(\xi)}^{2} \|\xi\|^{t-d}\, d\xi\, d\ba\\
		 &\leq \widetilde{C}\int_{\Sigma} \int_{\Sigma}  \int_{\euclid[d]} (1+\|{T_{x\wedge y}^{\ast} \xi}\|)^{-N} \|{\xi}\|^{t-d} \, d\xi  \,d\mu(x)d\mu(y) \\
		& \leq C'  \int_{\Sigma} \int_{\Sigma} \left( \phi^{t}(T_{x\wedge y})\right)^{-1}  \,d\mu(x)d\mu(y)  & (\text{by Proposition \ref{lem:t-d*}}) \\
 		& \leq C' \sum_{n=0}^{\infty} \sum_{I\in\Sigma_{n}} \left(\phi^{t}(T_I)\right)^{-1} \mu([I])^2\\
 		& \leq C''  \sum_{n=0}^{\infty} \sum_{I\in\Sigma_{n}} r^{n} \mu([I])= \frac{C''}{1-r} < \infty, & (\text{by } \eqref{e-ttt}) \\
 			\end{aligned}
	\end{equation*}
where $\widetilde{C}, C', C''$ are  positive constants. This implies that $\mathcal I_t(\mu^\ba)<\infty$ and hence  $\dim_S\mu^\ba\geq t$ for almost all $\ba$. It is known that $\dim_{\rm H}\eta\geq \min\{\dim_S\eta,\, d\}$ for each Borel probability measure $\eta$ on $\R^d$ (see e.g.~\cite[p.~199]{PeresSchlag2000}). Hence $\dim_{\rm H}\mu^\ba\geq t$ for almost all $\ba$. Since $t$ is arbitrarily taken from $(0,s)\backslash \Z$, we obtain $\dim_{\rm H}\mu^\ba\geq s$ for almost all $\ba$.

To prove (ii), notice that there is $r\in (0,1)$ such that 
\begin{equation}
\label{e-ttt*}
\mu([I]) \leq C r^{|I|}|\det(T_I)|= C r^{|I|}g_0(T_I)  \quad\text{ for } I \in \Sigma^{\ast},
\end{equation}
where $g_0$ is defined as in \eqref{e-gt*}. Take $N>d$.  For each $\rho>0$, following the proof of Theorem \ref{thm:general} (in which take $t=0$) yields
\begin{equation*}
\begin{aligned} \int_{B(0,\rho)}&  \int_{\euclid[d]} \abs{\widehat{\mu^{\ba}}(\xi)}^{2} \, d\xi\, d\ba\\
		 &\leq \widetilde{C}\int_{\Sigma} \int_{\Sigma}  \int_{\euclid[d]} (1+\|{T_{x\wedge y}^{\ast} \xi}\|)^{-N}  \, d\xi  \,d\mu(x)d\mu(y) \\
		& \leq C'  \int_{\Sigma} \int_{\Sigma}  g_0(x\wedge y)^{-1}  \,d\mu(x)d\mu(y)  & (\text{by Proposition \ref{lem:t}}) \\
 		& \leq C' \sum_{n=0}^{\infty} \sum_{I\in\Sigma_{n}} g_0(I)^{-1} \mu([I])^2\\
 		& \leq C''  \sum_{n=0}^{\infty} \sum_{I\in\Sigma_{n}} r^{n} \mu([I])= \frac{C''}{1-r} < \infty. & (\text{by } \eqref{e-ttt*}) \\
 			\end{aligned}
	\end{equation*}
It implies that for $ \mathcal L^{md} $-a.e.\ $ \ba \in \euclid[md] $, $\widehat{\mu^\ba}\in L^2(\R^d)$ and thus  $\mu^\ba$ is absolutely continuous with an $L^2$-density.
\end{proof}

\bigskip

\noindent {\bf Acknowledgements}. This work was partially supported by the General Research Funds (CUHK14304119, CUHK14303021)  from the Hong Kong Research Grant Council, and by a direct grant for research from the Chinese University
of Hong Kong. The authors are grateful to Changhao Chen for helpful comments.

\end{document}